\numberwithin{equation}{section}
\renewcommand{\phi}{\varphi}
\newcommand{\bfC}{\mathbf{C}}
\newcommand{\bfE}{\mathbf{E}}
\newcommand{\bfF}{\mathbf{F}}
\newcommand{\bfM}{\mathbf{M}}
\newcommand{\bfS}{\mathbf{S}}
\newcommand{\bbC}{\mathbb{C}}
\newcommand{\bbF}{\mathbb{F}}
\newcommand{\bbN}{\mathbb{N}}
\newcommand{\bbQ}{\mathbb{Q}}
\newcommand{\bbS}{\mathbb{S}}
\newcommand{\bbZ}{\mathbb{Z}}
\newcommand{\op}{\mathrm{op}}
\newcommand{\rmB}{\mathrm{B}}
\newcommand{\rmE}{\mathrm{E}}
\newcommand{\rmF}{\mathrm{F}}
\newcommand{\rmH}{\mathrm{H}}
\newcommand{\rmK}{\mathrm{K}}
\newcommand{\rmM}{\mathrm{M}}
\newcommand{\rmQ}{\mathrm{Q}}
\newcommand{\rmS}{\mathrm{S}}
\DeclareMathOperator{\id}{id}
\DeclareMathOperator{\Map}{Map}
\DeclareMathOperator{\End}{End}
\DeclareMathOperator{\Aut}{Aut}
\DeclareMathOperator{\Pre}{Pre}
\DeclareMathOperator{\GL}{GL}
\DeclareMathOperator{\colim}{colim}
\newcommand{\Sets}{\mathrm{Sets}}
\newcommand{\Mod}{\mathrm{Mod}}
\newcommand{\Abel}{\mathrm{Abel}}
\newcommand{\Groups}{\mathrm{Groups}}
\newcommand{\Monoids}{\mathrm{Monoids}}
\newcommand{\Nil}{\mathrm{Nil}}
\newcommand{\Boole}{\mathrm{Boole}}
\newcommand{\Cantor}{\mathrm{Cantor}}
\newcommand{\Post}{\mathrm{Post}}
\DeclareRobustCommand{\coprod}{\mathop{\text{\fakecoprod}}}
\newcommand{\fakecoprod}{%
  \sbox0{$\prod$}%
  \smash{\raisebox{\dimexpr.9625\depth-\dp0}{\scalebox{1}[-1]{$\prod$}}}%
  \vphantom{$\prod$}%
}
\newtheorem{theorem}{Theorem}[section]
\newtheorem{proposition}[theorem]{Proposition}
\newtheorem{corollary}[theorem]{Corollary}
\newtheorem{lemma}[theorem]{Lemma}
\newtheorem{theoremABC}{Theorem}
\theoremstyle{definition}
\newtheorem{definition}[theorem]{Definition}
\newtheorem{example}[theorem]{Example}
\newtheorem{remark}[theorem]{Remark}
\newtheorem{defn}[theorem]{Definition}
\title{\bf Boolean algebras, Morita invariance, and the algebraic~K-theory of Lawvere theories}
\author{Anna Marie Bohmann and Markus Szymik}
\date{\mydate\today}
\begin{document}

\maketitle

\renewcommand{\abstractname}{\vspace{-2\baselineskip}}
\begin{abstract}
\noindent
The algebraic~K-theory of Lawvere theories is a conceptual device to elucidate the stable homology of the symmetry groups of algebraic structures such as the permutation groups and the automorphism groups of free groups. In this paper, we fully address the question of how Morita equivalence classes of Lawvere theories interact with algebraic~K-theory. On the one hand, we show that the higher algebraic~K-theory is invariant under passage to matrix theories. On the other hand, we show that the higher algebraic~K-theory is not fully Morita invariant because of the behavior of idempotents in non-additive contexts: We compute the~K-theory of all Lawvere theories Morita equivalent to the theory of Boolean algebras.
\end{abstract}


Quillen's seminal work~\cite{Quillen:Annals} used algebraic~K-theory to organize our thinking about the stable homology of general linear groups. This initiated generalizations to contexts far broader than that of rings. In this paper, we restrict our attention to Lawvere's algebraic theories. These structures provide a happy medium between rings and symmetric monoidal categories: no higher-categorical language is required, and they are much more flexible than rings. For instance, the stable homology of the symmetric groups and of the automorphism groups of free groups~\cite{Galatius} fit into this context as well. Our results are motivated by such stable homology computations, the starting point being the following fact~(see Theorem~\ref{thm:colimit}): For every Lawvere theory~$T$, there is an isomorphism
\[
\colim_r\rmH_*(\Aut(T_r))\cong\rmH_*(\Omega^\infty_0\rmK(T))
\]
between the stable homology of the automorphism groups of finitely generated free objects of the theory~$T$ and the homology of the zero component~$\Omega^\infty_0\rmK(T)$ of the algebraic~K-theory space~$\Omega^\infty\rmK(T)$. The surprising power of this observation comes from two sources. First, the~K-theory space or spectrum is often easier to describe than its homology. This happens, for instance for the symmetric groups. Second, algebraic~K-theory can sometimes be computed without explicitly using the groups~$\Aut(T_r)$~(see~\cite{Szymik+Wahl}, for example). Here, we present a new stable homology computation, for the theory of Boolean algebras, phrased once again in terms of algebraic~K-theory.

\begin{theoremABC}\label{thmABC:Boole}
For the algebraic~K-theory of the Lawvere theory~$\Boole$ of Boolean algebras, we have~$\rmK_*(\Boole)\cong\pi_*(\bbS)/2\text{\upshape--power torsion}$.
\end{theoremABC}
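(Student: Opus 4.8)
The plan is to reduce the statement to a concrete stabilization problem via Stone duality and then to recognize the answer as a localization of the Barratt--Priddy--Quillen sphere. First I would identify the symmetric monoidal groupoid of finitely generated free Boolean algebras that computes~$\rmK(\Boole)$. Stone duality turns the free Boolean algebra~$\Boole_r$ on~$r$ generators into the finite discrete space~$\{0,1\}^r$ of its~$2^r$ homomorphisms to the two-element algebra, so~$\Aut(\Boole_r)\cong\Sigma_{2^r}$, and the coproduct~$\Boole_r\sqcup\Boole_s\cong\Boole_{r+s}$ corresponds to the product of dual spaces~$\{0,1\}^r\times\{0,1\}^s$. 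Thus the monoidal structure is the block-product embedding
\[
\Sigma_{2^r}\times\Sigma_{2^s}\longrightarrow\Sigma_{2^{r+s}},
\]
and~$\rmK(\Boole)$ is the group completion of the permutative category~$\coprod_{r\ge0}B\Sigma_{2^r}$ under this product.

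Next I would feed this into the group completion theorem. On path components the monoid of isomorphism classes is~$(\bbN,+)$ via~$r\mapsto[\Boole_r]$, so~$\rmK_0(\Boole)\cong\bbZ$, and the theorem identifies the homology of a component of~$\Omega^\infty\rmK(\Boole)$ with the colimit
\[
\colim_r\rmH_*(\Sigma_{2^r})
\]
taken along multiplication by the class~$[\Boole_1]$. Concretely this stabilization map~$\Sigma_{2^r}\to\Sigma_{2^{r+1}}$ is~$\sigma\mapsto\sigma\oplus\sigma$, the ``doubling'' that records product with the two-point dual space. The crux is to recognize this doubling after group completion: writing~$\sigma\mapsto\sigma\oplus\sigma$ as the diagonal~$\Sigma_{2^r}\to\Sigma_{2^r}\times\Sigma_{2^r}$ followed by block sum, and using the identification~$\rmK(\Sets)\simeq\bbS$ of Barratt--Priddy--Quillen, the block sum induces the loop addition on~$\Omega^\infty\bbS$, so doubling induces the self-map~$x\mapsto x+x$, that is,~$\Omega^\infty$ of multiplication by~$2$ on~$\bbS$.

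Granting this, the tower computing~$\rmK(\Boole)$ becomes the mapping telescope of~$2\colon\bbS\to\bbS$ on positive-degree homotopy, while~$\pi_0$ is pinned to~$\bbZ$ by the group completion of~$(\bbN,+)$ rather than being inverted. Since homotopy groups commute with this filtered colimit, I would conclude~$\rmK_0(\Boole)\cong\bbZ$ and, for~$n\ge1$,
\[
\rmK_n(\Boole)\cong\colim\bigl(\pi_n(\bbS)\xrightarrow{2}\pi_n(\bbS)\xrightarrow{2}\cdots\bigr)\cong\pi_n(\bbS)[\tfrac12].
\]
Because~$\pi_n(\bbS)$ is finite for~$n\ge1$, inverting~$2$ kills exactly its~$2$-primary part, i.e.\ its~$2$-power torsion, and the torsion-free group~$\pi_0(\bbS)=\bbZ$ is unchanged; assembling the degrees gives~$\rmK_*(\Boole)\cong\pi_*(\bbS)/2\text{-power torsion}$. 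The main obstacle is the middle step: making the identification of the combinatorial doubling map with multiplication by~$2$ precise and natural enough to hold at the level of spectra (not merely on homology), and bookkeeping the gradings so that the group-completed~$\pi_0$ remains~$\bbZ$ while only the reduced, positive-degree part is~$2$-locally inverted.
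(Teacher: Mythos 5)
Your proposal is correct and follows essentially the same route as the paper's proof of the general case (Theorem~\ref{thm:K(Post_v)} for~$\Post_v$, with~$v=2$): dualize to identify~$\Aut(\Boole_r)\cong\Sigma(2^r)$, recognize stabilization as the block-sum doubling~$\sigma\mapsto\sigma\oplus\sigma$, compare with the spaces~$\rmQ(d)$ via Barratt--Priddy to see multiplication by~$2$ on the sphere, and use finiteness of~$\pi_n(\bbS)$ for~$n\geqslant1$ to convert the localization~$[\tfrac12]$ into the quotient by~$2$--power torsion while~$\rmK_0\cong\bbZ$ is computed separately. The ``main obstacle'' you flag is not actually one: as in the paper, a homology isomorphism from the colimit~$\rmB\Sigma(2^\infty)$ to the telescope~$\rmQ(0)[\tfrac12]$ suffices, since after group completion (both sides being group-like) a homology isomorphism is automatically a weak equivalence, so no spectrum-level naturality of the doubling identification is needed.
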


In this result, the groups~$\pi_*(\bbS)$ are the stable homotopy groups of spheres. These groups are the~K-groups of the initial Lawvere theory of sets, but the resulting homomorphism to~$\rmK_*(\Boole)$ is not surjective~(see Proposition~\ref{prop:Morava}). Theorem~\ref{thmABC:Boole} is a consequence of the following spectrum-level result, proved as Theorem~\ref{thm:K(Post_v)}, which is also a generalization from Boolean algebras to many-valued logics as modeled by the Lawvere theories~$\Post_v$ of Post algebras of valence~$v$. The superscript in $R^\times$ refers to the units of a ring~(spectrum)~$R$.

\begin{theoremABC}\label{thmABC:Post}
For every integer~$v\geqslant 2$, there is a homotopy pullback square
\[
\xymatrix{
\rmK(\Post_v)\ar[r]\ar[d] & \bbS[1/v]^\times\ar[d]\\
\rmH\bbZ\ar[r]_-v  & \rmH\bbZ[1/v]^\times
}
\]
of spectra.
\end{theoremABC}

These results can be conceptualized in terms of Morita invariance. Two rings are called Morita equivalent if they have equivalent categories of modules. Morita equivalent rings must have isomorphic higher algebraic~K-groups~(see~\cite[IV~Ex.~1.21, IV~6.3.5]{Weibel}). More generally, two Lawvere theories are called Morita equivalent if their categories of models are equivalent. This is the case if and only one of them is an idempotent modification of a matrix theory of the other; see the brief review in Section~\ref{sec:matrix}. We first prove a positive result~(see Theorem~\ref{thm:matrix_invariance}), which we expect to be a useful tool in stable homology computations.

\begin{theoremABC}
The higher algebraic~K-theory of Lawvere theories is invariant under passage to matrix theories.
\end{theoremABC}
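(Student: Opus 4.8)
The plan is to reduce the statement to a comparison of stable (infinite) automorphism groups and then to recognize that comparison as a cofinal-subsequence identification. Write $T_m$ for the free $T$-model on $m$ generators, and recall that $\rmK(T)$ is the K-theory of the symmetric monoidal groupoid of finitely generated free $T$-models under coproduct, whose objects are the $T_m$ and whose morphisms are the isomorphisms. By the group-completion theorem, the basepoint component of the associated infinite loop space is $\Omega^\infty_0\rmK(T)\simeq B\Aut(T_\infty)^+$, where $\Aut(T_\infty)=\colim_m\Aut(T_m)$ is the colimit along the stabilization maps $-\sqcup T_1\colon\Aut(T_m)\to\Aut(T_{m+1})$; this is the homotopy-level refinement of Theorem~\ref{thm:colimit}. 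Consequently $\rmK_i(T)\cong\pi_i B\Aut(T_\infty)^+$ for every $i\geqslant 1$, so it suffices to show that the inclusion of free models induces an isomorphism on these infinite automorphism groups.

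Next I would unwind the matrix theory, as reviewed in Section~\ref{sec:matrix}: the finitely generated free models of the matrix theory $\rmM_k(T)$ are precisely the free $T$-models on a multiple of $k$ generators, with a natural identification $\Aut_{\rmM_k(T)}(n)\cong\Aut(T_{kn})$ compatible with coproducts. Under this identification the comparison functor sends the $n$-th free $\rmM_k(T)$-model to $T_{kn}$, so it is the symmetric monoidal inclusion of the full subgroupoid on the objects $T_0,T_k,T_{2k},\dots$. Crucially, the stabilization map for $\rmM_k(T)$ is $-\sqcup(\rmM_k(T))_1=-\sqcup T_k$, which under the identification is exactly the $k$-fold composite $(-\sqcup T_1)^{\circ k}$ of the stabilization maps for $T$. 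Hence on infinite automorphism groups the functor realizes the comparison $\colim_n\Aut(T_{kn})\to\colim_m\Aut(T_m)$ whose source is the colimit of the subsequence indexed by the multiples of $k$, with transition maps the $k$-fold composites of those in the target.

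Finally I would invoke the elementary fact that a sequential colimit is computed by any cofinal subsequence: since the multiples of $k$ are cofinal in $\bbN$ and the transition maps match, the comparison $\colim_n\Aut(T_{kn})\to\colim_m\Aut(T_m)$ is an isomorphism. Therefore $\Aut_{\rmM_k(T)}(T_\infty)\cong\Aut(T_\infty)$, the plus constructions agree, and $\rmK_i(\rmM_k(T))\cong\rmK_i(T)$ for all $i\geqslant 1$. On $\pi_0$ the map is multiplication by $k$ on $\bbZ$, which is exactly why only the \emph{higher} K-theory is asserted to be invariant. The point I expect to require the most care is promoting this group-completion identification to a genuine, functorial map of spectra inducing the stated maps on homotopy; by contrast, the argument deliberately avoids any retract- or summand-based cofinality theorem, since such hypotheses fail for non-additive theories — and it is precisely this failure that is responsible for the breakdown of full Morita invariance under idempotent modification.
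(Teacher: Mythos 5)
Your proposal is correct and takes essentially the same route as the paper's proof: identify $\Aut(\rmM_n(T)_r)\cong\Aut(T_{nr})$, observe that the stabilization diagram of the matrix theory sits inside the diagram~\eqref{eq:stableG} for~$T$ as a cofinal subsequence (every $n$-th term, with transition maps the $n$-fold composites of those for~$T$), identify the colimits, and transfer this to K-theory via the plus-construction description of Theorem~\ref{thm:colimit}. Your explicit check that the matrix-theory stabilization $-+ (\rmM_n(T))_1$ corresponds to the $n$-fold composite of $-+T_1$ merely spells out what the paper compresses into ``naturally embeds as a cofinal subdiagram,'' and your closing caveat about the spectrum-level map is exactly the content of the paper's Remark~\ref{rem:surjection_2}.
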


Because we define the algebraic~K-theory of Lawvere theories in terms of free models, there is no hope of extending this result to~$\rmK_0$: there are even Morita equivalent rings that have non-isomorphic~$\rmK_0$'s when these~K-groups are defined using free modules only. This is due, of course, to the presence of projectives that are not free. Arguably, the ability to detect those non-free projectives is one desirable feature of lower~K-theory. For rings, we could have built that feature into our theory by completing idempotents.  In an additive category, all retracts have complements, and this completion does not change the higher algebraic~K-theory, only~$\rmK_0$. However, for general Lawvere theories, this fix for~$\rmK_0$ is not possible without changing the higher algebraic~K-theory: we show that completing at idempotents can change the higher algebraic~K-groups. In fact, since the Lawvere theories~$\Post_v$ are all Morita equivalent, our computations in Theorem~\ref{thmABC:Post} show the following:

\begin{theoremABC}\label{thmABC:syntax_semantics}
The higher algebraic~K-theory of Lawvere theories is not Morita invariant. 
\end{theoremABC}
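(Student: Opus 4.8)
The plan is to obtain Theorem~\ref{thmABC:syntax_semantics} as an immediate corollary of the computation in Theorem~\ref{thmABC:Post}, combined with the fact recalled in Section~\ref{sec:matrix} that the theories $\Post_v$ for $v\geqslant 2$ all lie in a single Morita equivalence class. The key observation is that these theories are pairwise Morita equivalent while the right-hand side $\pi_*(\bbS)/v\text{-power torsion}$ of Theorem~\ref{thmABC:Post} depends genuinely on $v$, because which torsion is quotiented out changes with $v$. So it suffices to isolate a single degree $*\geqslant 1$ and two valences for which the resulting abelian groups are non-isomorphic; producing one such witnessing pair already establishes the failure of Morita invariance.

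Concretely, I would compare $v=2$ and $v=3$ in degree one. Since $\pi_1(\bbS)\cong\bbZ/2$, Theorem~\ref{thmABC:Post} yields $\rmK_1(\Post_2)\cong(\bbZ/2)/2\text{-power torsion}=0$, whereas $\rmK_1(\Post_3)\cong(\bbZ/2)/3\text{-power torsion}\cong\bbZ/2$, the latter because $\bbZ/2$ carries no nontrivial $3$-power torsion. As $\Post_2$ and $\Post_3$ are Morita equivalent but $\rmK_1(\Post_2)\not\cong\rmK_1(\Post_3)$, and $\rmK_1$ is a higher K-group, higher algebraic K-theory cannot be a Morita invariant. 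One could equally use degree three, where $\pi_3(\bbS)\cong\bbZ/24$ gives $\rmK_3(\Post_2)\cong\bbZ/3$ and $\rmK_3(\Post_3)\cong\bbZ/8$.

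I do not anticipate a genuine obstacle in this final deduction: all of the substance sits in Theorem~\ref{thmABC:Post} and in establishing that the $\Post_v$ form one Morita equivalence class. Given those two inputs, what remains is only the elementary arithmetic that quotienting $\pi_1(\bbS)$ by $2$-power torsion versus $3$-power torsion produces non-isomorphic groups, which is immediate. If one wanted robustness across all pairs $v\neq w$, the mild care needed would be to read ``$v$-power torsion'' for composite $v$ as torsion supported at the primes dividing $v$, and then to locate a prime $p$ dividing exactly one of $v,w$ together with a degree in which $\pi_*(\bbS)$ carries $p$-torsion; but since the theorem asserts only non-invariance, the single pair $(\Post_2,\Post_3)$ suffices.
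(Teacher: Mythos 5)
Your proposal is correct and follows exactly the paper's own route: Theorem~\ref{thmABC:syntax_semantics} is deduced there as an immediate consequence of Theorem~\ref{thmABC:Post} together with Dukarm's observation that the theories $\Post_v$ are all Morita equivalent, which is precisely your argument. Your explicit witnessing pair (comparing $\rmK_1(\Post_2)=0$ with $\rmK_1(\Post_3)\cong\bbZ/2$) is a correct and harmless instantiation of what the paper leaves implicit.
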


We can rephrase this result in terms of the~``syntax'' of a Lawvere theory, which is defined by the{\it~free} models, and its~``semantics,'' which comprises {\it all} models: the higher algebraic~K-theory of an algebraic theory depends essentially on the syntax of the theory, rather than merely its semantics. We refer to Lawvere's writings~\cite{Lawvere:PNAS, Lawvere:Dialectica, Lawvere:Introduction} for the distinction between syntax and semantics in this context. From the perspective of mathematical logic and topos theory~\cite{Cartier}, different notions of equivalence of theories, both semantic and syntactical, have recently been discussed and compared in~\cite{Barrett--Halvorson, Tsementzis}.


While stable homology computations are one of our motivations for considering the algebraic K-theory of Lawvere theories, the related issue of homological stability is not the focus of the present work. We refer to the paper~\cite{R-WW} by Randal-Williams and Wahl, which discusses the homological stability problem in a more general framework than ours. Nonetheless, the specific setting of Lawvere theories balances rigidity and flexibility in a way that suggests it to be particularly amenable to homological stability questions as well. Additional motivation for the algebraic K-theory of Lawvere theories, in the form of multiplicative matters and applications to assembly maps, is discussed in~\cite{Bohmann+Szymik:assembly}. 

{\bf Outline.} Section~\ref{sec:theories} recalls Lawvere's categorical approach to universal algebra and sets up the notation that we use. In Section~\ref{sec:K}, we define the~K-theory of algebraic theories and show that it encodes the stable homology of the automorphism groups of the free models. A plethora of examples that do not come from rings and modules are presented in Section~\ref{sec:examples} before we start our discussion of Morita invariance with our theorem for matrix theories in Section~\ref{sec:matrix}. The final Section~\ref{sec:Boole} contains the computation for the theory of Boolean algebras and all theories equivalent to them.


\section{Lawvere theories}\label{sec:theories}

We need to review the basic notions and set up our notation for Lawvere theories~\cite{Lawvere:PNAS}. Some textbook references are~\cite{Pareigis, Schubert, Borceux, ARV}.

Choose a skeleton~$\bfE$ of the category of finite sets and~(all) maps between them. For each integer~\hbox{$r\geqslant0$} such a category has a unique object with precisely~$r$ elements, and there are no other objects. For the sake of explicitness, let us choose the model~\hbox{$\underline{r}=\{a\in\bbZ\,|\,1\leqslant a\leqslant r\}$} for such a set. A set with~$r+s$ elements is the~(categorical) sum, or co-product, of a set with~$r$ elements and a set with~$s$ elements.

\begin{definition}
A {\em Lawvere theory}~$T=(\bfF_T,\rmF_T)$ is a pair consisting of a small category~$\bfF_T$ together with a functor~\hbox{$\rmF_T\colon\bfE\to\bfF_T$} that is bijective on sets of objects and that preserves sums. This means that the canonical map~\hbox{$\rmF_T(\underline{r})+\rmF_T(\underline{s})\to\rmF_T(\underline{r}+\underline{s})$} induced by the canonical injections is an isomorphism for all sets~$\underline{r}$ and~$\underline{s}$ in~$\bfE$. 
\end{definition}

The image of the set~$\underline{r}$ with~$r$ elements under the functor~$\rmF_T\colon\bfE\to\bfF_T$ will be written~$T_r$, so that the object~$T_r$ is the sum in the category~$\bfF_T$ of~$r$ copies of the object~$T_1$. 

We recall two of the most important classes of examples of Lawvere theories.

\begin{example}\label{ex:Amodules}
Let~$A$ be a ring.  Let~$\bfF_A$ be the full subcategory of the category~$\Mod_A$ of~$A$--modules spanned by the modules~$A^{\oplus r}$ for~$r\geqslant0$. This category is a skeleton of the category of finitely generated, free~$A$--modules.
The functor~\hbox{$\rmF_A\colon\bfE\to\bfF_A$} that sends the set with~$r$ elements to the free module~$A^{\oplus r}$ with~$r$ generators is a Lawvere theory, called the theory of~$A$--modules. Note that~$A^{\oplus 0}=0$ is the~$0$ module. In particular, for the initial ring~$A=\bbZ$, we have the Lawvere theory of abelian groups. 
\end{example}

Rings can be very complicated, and this is even more true for Lawvere theories, which are significantly more general.

\begin{example}\label{ex:Gsets}
Let~$G$ be a group.  Let~$\bfF_G$ be~(a skeleton of) the full subcategory of the category of~$G$--sets on the free~$G$--sets with finitely many orbits: those of the form~$\coprod_r G$.
The functor~\hbox{$\rmF_G\colon\bfE\to\bfF_G$} sending~$r$ to~$\coprod_r G$ is a Lawvere theory, called the theory of~$G$--sets. In particular, for the trivial group~\hbox{$G=\{e\}$}, we have the Lawvere theory~$E$ of sets. 
\end{example}

\begin{remark}
Some authors prefer to work with the opposite category~$\bfF_T^{\op}$, so that the object~$T_r$ is the {\em product}~(rather than the co-product) of~$r$ copies of the object~$T_1$. For example, this was Lawvere's convention when he introduced this notion in~\cite{Lawvere:PNAS}. Our convention reflects the point of view that the object~$T_r$ should be thought of as the free~$T$--model~(or~$T$--algebra) on~$r$ generators, covariantly in~$r$~(or rather in~$\bfE$). To make this precise, recall the definition of a model~(or algebra) for a theory~$T$.
\end{remark}

\begin{definition}\label{def:models}
Given a Lawvere theory~$T$, a {\em$T$--model}~(or \emph{$T$--algebra}) is a presheaf~$X$~(of sets) on the category~$\bfF_T$ that sends~(categorical) sums in~$\bfF_T$ to~(categorical, i.e.~Cartesian) products of sets.~(This means that the canonical map~\hbox{$X(T_r+T_s)\to X(T_r)\times X(T_s)$} induced by the injections is a bijection for all sets~$\underline{r}$ and~$\underline{s}$ in~$\bfE$.) 
We write~$\bfM_T$ for the category of~$T$--models, and we write~$\bfM_T(X,Y)$ to denote the set of morphisms~$X\to Y$ between~$T$--algebras.  Such a morphism is defined to be  a map of presheaves, i.e., a natural transformation, so that~$\bfM_T$ is a full subcategory of the category of presheaves on~$\bfF_T$.
\end{definition}

The values of a~$T$--model are determined up to isomorphism by the value at~$T_1$, and we often use the same notation for a model and its value at~$T_1$.

\begin{example}
The categories of models for the Lawvere theories of Examples~\ref{ex:Amodules} and~\ref{ex:Gsets} are the categories of~$A$--modules and~$G$--sets, respectively. For example, the action of~$G$ on itself from the right gives for each~$g\in G$ a~$G$--map~\hbox{$g\colon \coprod_1G\to\coprod_1G$} in the category~$\bfF_G$ of Example~\ref{ex:Gsets}. Given a model~$X\colon\bfF_G^\op\to\Sets$, the set maps~\hbox{$X(g)\colon X(\coprod_1 G)\to X(\coprod_1 G)$} combine to produce the action of the group~$G$ on the set~$X(\coprod_1 G)$.
\end{example}

\begin{example}\label{ex:free}
The co-variant Yoneda embedding~$\bfF_T\to\Pre(\bfF_T)
$ sends the object~$T_r$ of~$\bfF_T$ to the presheaf~$T_s\mapsto\bfF_T(T_s,T_r)$ represented by it. Such a presheaf is readily checked to be a~$T$--model. We refer to a~$T$--model of this form as~\emph{free}. The definitions unravel to give natural bijections~\hbox{$\bfM_T(T_r,X)\cong X^r$} for~$T$--models~$X$, so that~$T_r$ is indeed a free~$T$--model on~$r$ generators. 
\end{example}

We can summarize the situation as follows. The Yoneda embedding of~$\bfF_T$ into presheaves on~$\bfF_T$ factors~\hbox{$\bfF_T\to\bfM_T\to\Pre(\bfF_T)$} through the category~$\bfM_T$ of~$T$--models. Both functors are fully faithful, and the free~$T$--models are those in the~(essential) image of the first functor.

\begin{definition}\label{def:morphism}
A \emph{morphism}~$S\to T$ between Lawvere theories is a functor~\hbox{$L\colon\bfF_S\to\bfF_T$} that~(strictly) preserves sums. This is equivalent to the condition that~$\rmF_T\cong L\circ\rmF_S$, i.e., that~$L$ is a map under~$\bfE$.
\end{definition}

It is common to describe a morphism~$S\to T$ between two Lawvere theories by giving a functor~\hbox{$R\colon\bfM_T\to\bfM_S$} that is compatible with the forgetful functors to the category~$\bfM_E$ of sets. In this case,~$R$ has a left-adjoint by Freyd's adjoint functor theorem and~$L$ is induced by the restriction of the left adjoint to~$R$ to free models.

For any Lawvere theory~$T$, the category~$\bfM_T$ of~$T$--models is complete and cocomplete. Limits are constructed pointwise, and the existence of colimits follows from the adjoint functor theorem.
The category~$\bfM_T$ becomes symmetric monoidal with respect to the~(categorical) sum, and the unit object~$T_0$ for this structure is also an initial object in the category~$\bfM_T$.


\section{Algebraic~K-theory and stable homology}\label{sec:K}

In this section, we define the algebraic~K-theory spectrum~$\rmK(T)$ of a Lawvere theory~$T$, show how it encodes the stable homology of the automorphism groups of free~$T$--models, and prove our positive results on Morita invariance.

We first specify the constructions of~K-theory we use in this paper.  Our primary approach is to view Lawvere theories as a special case of symmetric monoidal categories and apply the classic constructions of~K-theory for the latter. There are several ways of approaching these constructions; we begin with a brief overview.

Let~$\bfS$ denote a symmetric monoidal groupoid. For the following to make sense,~$\bfS$ needs to satisfy an additional assumption, but we show in Proposition~\ref{prop:stabilization_is_faithful} that this is always the case for the categories we are interested in. We can then pass to Quillen's categorification~$\bfS^{-1}\bfS$ of the Grothendieck construction. The canonical morphism~$\rmB\bfS\to\rmB\bfS^{-1}\bfS$ between the classifying spaces is a group completion, and the target is an infinite loop space. We refer to~\cite{Grayson} and Thomason's particularly brief and enlightening discussion~\cite{Thomason} for detail. To build a~K-theory spectrum~$\rmK(\bfS)$ with underlying infinite loop space~$\Omega^\infty\rmK(\bfS)\simeq\rmB\bfS^{-1}\bfS$, we can use Segal's definition of the algebraic~K-theory of a symmetric monoidal category in terms of~$\Gamma$--spaces. The equivalence comes from~\cite[\S4]{Segal}, where he shows that~$\Omega^\infty\rmK(\bfS)$ is also a group completion of~$\rmB\bfS$. 


\begin{defn}\label{def:K(T)}
Let~$T$ be a Lawvere theory. The {\it algebraic~K-theory} of~$T$ is the spectrum
\begin{equation}
\rmK(T)=\rmK(\bfF_T^\times),
\end{equation}
that is, the spectrum corresponding to the symmetric monoidal groupoid~$\bfF_T^\times$ of isomorphisms in the symmetric monoidal category~$\bfF_T$ of finitely generated free~$T$--models, where the monoidal structure is given by the categorical sum.
\end{defn}

Since the category~$\bfF_T$ can be identified with the symmetric monoidal category of finitely generated free~$T$--models, Definition~\ref{def:K(T)} concerns the algebraic~K-theory of finitely generated free~$T$--models. In particular, the group~$\rmK_0(T)=\pi_0\rmK(T)$ is the Grothendieck group of isomorphism classes of finitely generated free~$T$--models. This group is always cyclic, generated by the isomorphism class~$[\,T_1\,]$ of the free~$T$--model on one generator. However, the group~$\rmK_0(T)$ does not have to be infinite cyclic, as the Examples~\ref{ex:Cantor} and~\ref{ex:K_0M_n} below show.

\begin{remark}\label{rem:surjection}
A morphism~$S\to T$ of Lawvere theories~(as in Definition~\ref{def:morphism}) induces, via the left-adjoint functor~$\bfF_S\to\bfF_T$, a morphism~$\rmK(S)\to\rmK(T)$ of algebraic~K-theory spectra. The left adjoint~$\bfF_S\to\bfF_T$ sends the free~$S$--model~$S_1$ on one generator to the free~$T$--model~$T_1$ on one generator. It follows that the induced homomorphism~$\rmK_0(S)\to\rmK_0(T)$ between cyclic groups is surjective, being the identity on representatives.
\end{remark}

One reason for interest in the algebraic~K-theory of Lawvere theories is the relation to the stable homology of the sequence of automorphism groups attached to a Lawvere theory. We now make this relation made precise.

Let~$T$ be a Lawvere theory. The automorphism groups of the free algebras~$T_r$ often turn out to be very interesting~(see the examples in Section~\ref{sec:examples} below). We use the notation~$\Aut(T_r)$ for these groups. 

Given integers~$r,s\geqslant0$, there is a {\it stabilization} homomorphism
\begin{equation}\label{eq:stabilization}
\Aut(T_r)\longrightarrow\Aut(T_{r+s})
\end{equation}
that `adds' the identity of the object~$T_s$ in the sense of the categorical sum~$+$,  and we use additive notation for this operation. More precisely, stabilization sends an automorphism~$u$ of~$T_r$ to the automorphism of~$T_{r+s}$ that makes the diagram
\[
\xymatrix{
T_{r+s}\ar@{-->}[rr]
&&T_{r+s}\\
T_r+T_s\ar[u]^\cong\ar[rr]^-{u+{T_s}}_\cong
&&T_r+T_s\ar[u]_\cong
}
\]
commute. By abuse of notation, this automorphism of the object~$T_{r+s}$ will sometimes also be denoted by~\hbox{$u+T_s$}. 

\begin{remark}
The alert reader will have noticed that we have not specified our choice of isomorphism~$T_r+T_s\cong T_{r+s}$ in the preceding diagram.  While the requirement that~$T_{r+s}$ be the sum of~$T_r$ and~$T_s$ provides a canonical identification here, we could in fact use any choice of isomorphism. All such choices obviously differ by conjugation by an automorphism of~$T_{r+s}$, so that they induce the same map in homology, which is all that matters for the purposes of this section.
\end{remark}

\begin{proposition}\label{prop:stabilization_is_faithful}
For every Lawvere theory~$T$, the stabilization maps~$\Aut(T_r)\to\Aut(T_{r+1})$ are injective.
\end{proposition}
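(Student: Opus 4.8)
The plan is to recover an automorphism~$u$ of~$T_r$ from its stabilization~$u+T_1$, using only the universal property of the coproduct~$T_{r+1}=T_r+T_1$. Write~$j_1\colon T_r\to T_{r+1}$ and~$j_2\colon T_1\to T_{r+1}$ for the two coproduct injections. By construction~$u+T_1$ is the unique endomorphism of~$T_{r+1}$ with~$(u+T_1)\,j_1=j_1\,u$ and~$(u+T_1)\,j_2=j_2$. To prove injectivity I would assume~$u+T_1=u'+T_1$ for automorphisms~$u,u'$ of~$T_r$ and deduce~$u=u'$.

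The cleanest route is to evaluate on models, where the coproduct of~$\bfF_T$ becomes an honest product. For any~$T$--model~$X$, Definition~\ref{def:models} supplies a natural bijection~$X(T_{r+1})\cong X(T_r)\times X(T_1)$ induced by~$j_1$ and~$j_2$. Since~$u+T_1$ is the coproduct of~$u$ and~$\id_{T_1}$, the contravariant functoriality of~$X$ together with the two relations above shows that~$X(u+T_1)$ corresponds to~$X(u)\times\id_{X(T_1)}$ under this decomposition. Hence~$u+T_1=u'+T_1$ forces~$X(u)=X(u')$ for every model~$X$. Taking~$X$ to be the free model~$T_r$, so that~$X(T_r)=\bfF_T(T_r,T_r)$ and~$X(u)$ is precomposition with~$u$, and evaluating at~$\id_{T_r}$, I obtain~$u=u'$. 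This argument is uniform in~$r$; the degenerate case~$r=0$ is in any event immediate, since~$T_0$ is initial in~$\bfM_T$ and so~$\Aut(T_0)$ is trivial.

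The point that needs care---and my reason for routing the argument through models rather than staying inside~$\bfF_T$---is that a general Lawvere theory need not admit a ``projection''~$T_{r+1}\to T_r$ onto the first~$r$ coordinates. Building one from the coproduct structure would require a morphism~$T_1\to T_r$, and for~$r=0$ this is a morphism~$T_1\to T_0$, i.e.\ a nullary operation, which theories such as sets or groups lack. For~$r\geqslant1$ the obstruction vanishes: the morphism~$\underline 1\to\underline r$ in~$\bfE$ yields some~$f\colon T_1\to T_r$, and then~$p=(\id_{T_r},f)\colon T_{r+1}\to T_r$ satisfies~$p\,j_1=\id_{T_r}$, so that~$u=p\,(u+T_1)\,j_1$ exhibits injectivity directly. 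I would nonetheless present the model-theoretic version as the main argument, since it handles all~$r$ at once and makes transparent that injectivity is forced by the model-theoretic (product) behavior of the last coordinate.
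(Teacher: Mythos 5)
Your proof is correct, and your main argument takes a genuinely different route from the paper's. The paper stays entirely inside~$\bfF_T$: it reduces injectivity to triviality of the kernel, handles~$r=0$ by initiality of~$T_0$, and for~$r\geqslant1$ chooses a retraction~$\rho$ of the canonical embedding~$\sigma\colon T_r\to T_{r+1}$, so that~$u=\rho\,(u+T_1)\,\sigma$ recovers~$u$; such a~$\rho$ exists precisely because~$r\geqslant1$ supplies a morphism~$T_1\to T_r$. This is word for word the argument you sketch in your final paragraph with~$p=(\id_{T_r},f)$, so your ``secondary'' route \emph{is} the paper's proof, including the same case split at~$r=0$. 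Your main route---evaluating on models, where the coproduct becomes a product and~$X(u+T_1)$ becomes~$X(u)\times\id_{X(T_1)}$, then specializing to the representable model~$T_r$ and reading off~$u$ at~$\id_{T_r}$ by Yoneda---is a clean alternative: it avoids choosing a retraction and makes transparent the mechanism (on models, the added coordinate is an honest product factor that can be stripped off). What it glosses over is a hidden hypothesis in the cancellation step: from~$X(u)\times\id_{X(T_1)}=X(u')\times\id_{X(T_1)}$ one may conclude~$X(u)=X(u')$ only when~$X(T_1)\neq\emptyset$ (or when~$X(T_r)$ is itself empty, in which case both maps are empty). For the model you actually use, namely~$X=T_r$ with~$r\geqslant1$, one has~$X(T_1)=\bfF_T(T_1,T_r)\neq\emptyset$, since it contains the image under~$\rmF_T$ of any map~$\underline{1}\to\underline{r}$; so no harm is done, but this should be said explicitly---and note that it is the very same nonemptiness that powers the paper's retraction, so the two proofs are closer than they appear.

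One factual slip in a side remark: the theory of groups does \emph{not} lack nullary operations---the unit is one, the free group on zero generators is the trivial group rather than the empty set, and hence~$\bfF_T(T_1,T_0)$ is nonempty for groups. Sets or magmas are correct examples of theories without constants. This does not affect your proof, since the~$r=0$ case is trivial by initiality of~$T_0$ either way.
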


\begin{proof}
It is enough to show that the kernels are trivial. This is clear for~$r=0$, since~$T_0$ is initial, so that~$\Aut(T_0)$ is the trivial group. For positive~$r$ we can choose a retraction~$\rho$ of the canonical embedding~$\sigma\colon T_r\to T_{r+1}$. If~$u$ is in the kernel of the stabilization map, then we have the following commutative diagram.
\[
\xymatrix{
T_r\ar[d]_\sigma\ar[r]^-u&T_r\ar[d]_\sigma\ar@{=}[dr]&\\
T_{r+1}\ar[r]_-\id&T_{r+1}\ar[r]_-\rho&T_r
}
\]
It implies~$u=\id$.
\end{proof}


Stabilization leads to a diagram
\begin{equation}\label{eq:stableG}
\Aut(T_0)\longrightarrow 
\Aut(T_1)\longrightarrow 
\Aut(T_2)\longrightarrow 
\Aut(T_3)\longrightarrow\cdots
\end{equation}
of groups for every Lawvere theory~$T$. We  write~$\colim_r\Aut(T_r)$ for the colimit of the diagram~\eqref{eq:stableG} with respect to the stabilization maps. This is the {\em stable automorphism group} for the Lawvere theory~$T$.

Let us record the following group theoretical property of the stable automorphism groups. This is presumably well-known already in more or less generality. We nevertheless include an argument here for completeness' sake.

\begin{proposition}\label{prop:commutator_subgroup_perfect}
For every Lawvere theory~$T$, the commutator subgroup of the stable automorphism group~$\colim_r\Aut(T_r)$ is perfect.
\end{proposition}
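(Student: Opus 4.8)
The plan is to prove the stronger inclusion $[G,G]\subseteq[[G,G],[G,G]]$, where $G=\colim_r\Aut(T_r)$; together with the trivial reverse inclusion this gives perfection. Since $[G,G]$ is generated by commutators $[a,b]=aba^{-1}b^{-1}$ with $a,b\in G$, it suffices to show that each such generator already lies in the second commutator subgroup. Any two elements $a,b$ of the colimit are represented at a common finite stage, so by injectivity of the stabilization maps (Proposition~\ref{prop:stabilization_is_faithful}) I may assume $a,b\in\Aut(T_r)$ and work inside~$\Aut(T_{3r})$, identifying $T_{3r}\cong T_r+T_r+T_r$ via the sum structure on~$\bfF_T$. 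The image of $[a,b]$ in $G$ is then its stabilization $[a,b]+\id_{T_{2r}}$, so it is enough to exhibit this element of $\Aut(T_{3r})$ as a commutator of elements of $[G,G]$.

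The key tool is a Whitehead-style swap identity. Let $\tau_{12}$ and $\tau_{13}$ denote the symmetry automorphisms of $T_r+T_r+T_r$ interchanging the first and second, respectively the first and third, summands while fixing the remaining one; these exist because $\bfF_T$ is symmetric monoidal under~$+$. By naturality of the symmetry, conjugation by $\tau_{1j}$ transports a diagonal automorphism from the first summand to the $j$th, so that $[a+\id+\id,\ \tau_{12}]=(a+\id+\id)(\id+a^{-1}+\id)=a+a^{-1}+\id$, and likewise $[b+\id+\id,\ \tau_{13}]=b+\id+b^{-1}$. In particular both $A:=a+a^{-1}+\id$ and $B:=b+\id+b^{-1}$ are commutators in $\Aut(T_{3r})$ and hence lie in~$[G,G]$.

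Finally I would compute $[A,B]$. Because $+$ is a bifunctor, diagonal automorphisms multiply summandwise, and so their commutators are formed summandwise as well: on the first summand $A$ and $B$ act by $a$ and $b$, on the second by $a^{-1}$ and $\id$, and on the third by $\id$ and $b^{-1}$. Thus $[A,B]=[a,b]+[a^{-1},\id]+[\id,b^{-1}]=[a,b]+\id+\id$, which is exactly the stabilization of~$[a,b]$. As $A,B\in[G,G]$, this places $[a,b]$ in $[[G,G],[G,G]]$, completing the argument. I expect the only real obstacle to be bookkeeping rather than substance: one must invoke the symmetric monoidal coherence (bifunctoriality of~$+$ and naturality of the symmetry) carefully enough to license the summandwise computations, and---since a Lawvere theory supplies only the finite free objects~$T_r$ and no infinite model on which a direct Eilenberg swindle could be run---one must carry out the construction at the finite stage~$T_{3r}$ and then transport the resulting identity to~$G$ using the injectivity of stabilization.
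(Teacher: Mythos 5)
Your proof is correct and follows essentially the same route as the paper: it rests on the identical key identity $[a,b]+\id_{T_{2r}}=[\,a+a^{-1}+\id,\ b+\id+b^{-1}\,]$ in $\Aut(T_{3r})$, together with the observation that sums such as $a+a^{-1}+\id$ are themselves commutators and hence lie in $[G,G]$. The only difference is that the paper invokes this last fact as a known symmetric-monoidal version of Whitehead's lemma, whereas you prove the needed instances explicitly by conjugating with the symmetry automorphisms $\tau_{12}$ and $\tau_{13}$ --- a harmless, self-contained expansion of the same argument.
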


\begin{proof}
Given a commutator in the group~$\colim_r\Aut(T_r)$, we can represent it as~$[u,v]$ for a pair~$u,v$ of automorphisms in the group~$\Aut(T_r)$ for some~$r$. Allowing us thrice the space, in the group~$\Aut(T_{3r})$ we have the identity
\[
[u,v]+\id(T_{2r})=[u+u^{-1}+\id({T_r}),v+\id({T_r})+v^{-1}].
\]
It therefore suffices to prove that each element of the form~$w+w^{-1}$ is a commutator. This is a version of Whitehead's lemma that holds in every symmetric monoidal category: whenever there are automorphisms~$w_1,\dots,w_n$ of an object such that their composition~$w_1\dotsm w_n$ is the identity, then~$w_1+\dots+w_n$ is a commutator. We apply this to the category~$\bfF_T$ with respect to the monoidal product given by categorical sum~$+$.
\end{proof}

After these preliminaries, we now move on to give another model for the algebraic~K-theory space of a Lawvere theory~$T$, one that uses the Quillen plus construction. This construction led to Quillen's historically first definition of the algebraic~K-theory of a ring~\cite{Quillen:ICM1970}~(see also~\cite{Wagoner} and~\cite{Loday}).

The plus construction can be applied to connected spaces~$X$ for which the fundamental groups have perfect commutator subgroups. It produces a map~\hbox{$X\to X^+$} into another connected space~$X^+$ with the same integral homology, and such that the induced maps on fundamental groups are the abelianization. In fact, these two properties characterize the plus construction. By Proposition~\ref{prop:commutator_subgroup_perfect}, the commutator subgroup of~$\colim_r\Aut(T_r)$ is perfect. Therefore, the plus construction can be applied the classifying space~$\rmB\!\colim_r\Aut(T_r)$ in order to produce another space~$\rmB\!\colim_r\Aut(T_r)^+$. 

\begin{theorem}\label{thm:colimit}
For every Lawvere theory~$T$, there is an equivalence
\begin{equation}\label{eq:product}
\Omega^\infty\rmK(T)\simeq\rmK_0(T)\times\rmB\!\colim_r\Aut(T_r)^+
\end{equation}
of spaces.
\end{theorem}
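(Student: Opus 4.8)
The plan is to identify the right-hand side of~\eqref{eq:product} as the effect of the Quillen plus construction applied to Segal's group completion, and then invoke Segal's theorem to compare this with the infinite loop space~$\Omega^\infty\rmK(T)$. Concretely, the strategy is to build a zig-zag of equivalences connecting the two descriptions of the~K-theory space: the Grothendieck-construction/group-completion model~$\rmB\bfF_T^\times{}^{-1}\bfF_T^\times$ that underlies~$\Omega^\infty\rmK(T)$ by construction, and the plus-construction model on the right. Both are group completions of~$\rmB\bfF_T^\times$, so the heart of the matter is to relate the two standard ways of group-completing a classifying space of a symmetric monoidal groupoid whose~$\pi_0$ is cyclic.

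First I would analyze~$\rmB\bfF_T^\times$ directly. Since~$\bfF_T^\times$ is a groupoid with objects~$T_0,T_1,T_2,\dots$ and morphisms only between~$T_r$ and itself (the automorphisms), its classifying space decomposes as a disjoint union~$\rmB\bfF_T^\times\simeq\coprod_{r\geqslant0}\rmB\!\Aut(T_r)$. The monoidal sum makes~$\pi_0\rmB\bfF_T^\times\cong\bbN$ into a monoid under addition, and the stabilization maps of~\eqref{eq:stabilization} are exactly translation by~$[T_1]$. By Proposition~\ref{prop:stabilization_is_faithful} these stabilization maps are injective, so the telescope (homotopy colimit) along stabilization is~$\rmB\!\colim_r\Aut(T_r)$, and the induced action of~$\pi_0$ on homology is by stabilization.

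Next I would invoke the group-completion theorem in the form Segal proves (the McDuff--Segal theorem): for a homotopy-commutative topological monoid~$M$, the localization of~$\rmH_*(M)$ that inverts the action of~$\pi_0$ computes the homology of the group completion~$\Omega B M$. Applying this to~$M=\rmB\bfF_T^\times$ with~$\pi_0\cong\bbN$ generated by~$[T_1]$, inverting~$[T_1]$ gives a mapping telescope whose homology is~$\rmH_*(\bbZ)\otimes\rmH_*(\colim_r\Aut(T_r))$. This identifies the group completion, on each component, with~$\rmB\!\colim_r\Aut(T_r)$ up to homology, and on~$\pi_0$ with the group completion~$\bbZ$ (or its quotient)~$=\rmK_0(T)$. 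The plus construction enters because the group completion of~$\rmB\bfF_T^\times$ is a simple (indeed infinite-loop) space with abelian fundamental group, while~$\rmB\!\colim_r\Aut(T_r)$ has fundamental group~$\colim_r\Aut(T_r)$; the map~$\rmB\!\colim_r\Aut(T_r)\to\Omega^\infty_0\rmK(T)$ is a homology isomorphism onto the basepoint component and induces abelianization on~$\pi_1$ (using Proposition~\ref{prop:commutator_subgroup_perfect} for perfectness), so by the universal characterization of~$(-)^+$ recalled just above the statement, it factors as an equivalence~$\rmB\!\colim_r\Aut(T_r)^+\xrightarrow{\simeq}\Omega^\infty_0\rmK(T)$. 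Finally, splitting off the group of components~$\rmK_0(T)=\pi_0\Omega^\infty\rmK(T)$ as a factor, using that~$\Omega^\infty\rmK(T)$ is an infinite loop space so all components are equivalent, yields the product decomposition~\eqref{eq:product}.

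The main obstacle is the comparison of the two group completions, i.e.\ verifying that the homology equivalence produced by the McDuff--Segal theorem is compatible with the infinite-loop-space structure coming from Segal's~$\Gamma$--space definition, so that the plus construction on the telescope really does land in~$\Omega^\infty_0\rmK(T)$ and not merely some homology-equivalent space. The key technical point is that both~$\rmB\bfS\to\rmB\bfS^{-1}\bfS$ (Quillen's categorified Grothendieck construction) and~$\rmB\bfS\to\Omega^\infty\rmK(\bfS)$ (Segal's~$\Gamma$--space group completion) are group completions in the same homological sense, which is exactly the equivalence~$\Omega^\infty\rmK(\bfS)\simeq\rmB\bfS^{-1}\bfS$ recorded from~\cite{Segal, Thomason} before Definition~\ref{def:K(T)}. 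I would therefore lean on that equivalence to know the group completion is an infinite loop space with the right homotopy type, reducing the remaining work to the bookkeeping of the~$\pi_0$-splitting and the identification of the plus construction, both of which are formal once the group-completion theorem and the perfectness of the commutator subgroup are in hand.
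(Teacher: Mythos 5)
Your overall strategy is sound, and it differs from the paper's proof in a genuine way. The paper's argument is essentially a two-citation affair: it quotes Quillen's intermediate result, in Grayson's write-up~\cite[p.~224]{Grayson}, which directly asserts that~$\rmK_0\times\rmB\!\colim_r\Aut(T_r)^+$ is equivalent to the classifying space~$\rmB((\bfF_T^\times)^{-1}\bfF_T^\times)$, and then combines this with the equivalence~$\Omega^\infty\rmK(T)\simeq\rmB((\bfF_T^\times)^{-1}\bfF_T^\times)$ recorded before Definition~\ref{def:K(T)}. You keep the second ingredient but replace the first citation by a proof: Segal's group-completion statement from~\cite[\S4]{Segal}, the computation of the homology localization by a mapping telescope, and the universal property of the plus construction (using Proposition~\ref{prop:commutator_subgroup_perfect} for perfectness, and simplicity of the infinite-loop-space target, to convert the homology isomorphism out of~$\rmB\!\colim_r\Aut(T_r)$ into an equivalence out of~$\rmB\!\colim_r\Aut(T_r)^+$). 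In effect you re-prove Quillen's step rather than invoke it; this is longer but more self-contained, and your closing paragraph correctly isolates the one point where both arguments must lean on the literature, namely that Quillen's and Segal's group completions of~$\rmB\bfF_T^\times$ agree.

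There is, however, one concrete false claim in your opening analysis that must be repaired. You assert that~$\bfF_T^\times$ has morphisms only from~$T_r$ to itself, so that~$\rmB\bfF_T^\times\simeq\coprod_{r\geqslant0}\rmB\!\Aut(T_r)$ and~$\pi_0\rmB\bfF_T^\times\cong\bbN$. For a general Lawvere theory, distinct free models~$T_r$ and~$T_s$ can be isomorphic: the paper stresses, right after Definition~\ref{def:K(T)}, that~$\rmK_0(T)$ is cyclic but need not be infinite cyclic, and Examples~\ref{ex:Cantor} and~\ref{ex:K_0M_n} exhibit the Cantor theories with~$\rmK_0(\Cantor_a)\cong\bbZ/(a-1)$ finite; for the theory of modules over the trivial ring, all the~$T_r$ are isomorphic and~$\rmK_0$ vanishes. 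So the components of~$\rmB\bfF_T^\times$ are indexed by the set of isomorphism classes of objects, which is a quotient cyclic monoid of~$\bbN$, not~$\bbN$ itself. The repair is routine and does not disturb the structure of your argument: the group completion of that cyclic monoid is~$\rmK_0(T)$; the mapping telescope of translation by~$[\,T_1\,]$ has components indexed by~$\rmK_0(T)$, each equivalent to~$\rmB\!\colim_r\Aut(T_r)$; and the localized homology is~$\bbZ[\rmK_0(T)]\otimes\rmH_*(\colim_r\Aut(T_r))$ rather than~$\rmH_*(\bbZ)\otimes\rmH_*(\colim_r\Aut(T_r))$. With this bookkeeping corrected, your plus-construction step and the final splitting over~$\rmK_0(T)$ go through verbatim.
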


\begin{proof}
Quillen, in the his proof that the plus construction of~K-theory agrees with the one obtained from the Q-construction, takes an intermediate step~(see~\cite[p.~224]{Grayson}): he shows that the plus construction, together with~$\rmK_0$, gives a space that is equivalent to the classifying space of his categorification~$\bfS^{-1}\bfS$ of the Grothendieck construction of a suitable symmetric monoidal category~$\bfS$. This part of his argument applies here to show that there is an equivalence
\[
\rmK_0(T)\times\rmB\!\colim_r\Aut(T_r)^+\simeq\rmB((\bfF_T^\times)^{-1}\bfF_T^\times
)\]
of spaces for every Lawvere theory~$T$. The claim follows because we already know that the right hand side has the homotopy type of~$\Omega^\infty\rmK(T)$.
\end{proof}

In general, there seems to be no reason to believe that an artificial  product such as the one in~\eqref{eq:product} would form a meaningful whole~(see~\cite[Warning~2.2.9]{Schlichting}). The present case is special because~$\rmK_0(T)$ is generated by the isomorphism class of the free~$T$--algebra~$T_1$ of rank~$1$. Other constructions of the same homotopy type do not separate the group~$\rmK_0(T)$ of components from the rest of the space. One way or another, note that all components of the algebraic~K-theory space~$\rmK(T)$ are equivalent; the group~$\rmK_0(T)$ of components acts transitively on the infinite loop space~$\Omega^\infty\rmK(T)$ up to homotopy.

Since the plus construction does not change homology, the definition of the algebraic~K-theory space immediately gives the following result. 

\begin{theorem}\label{thm:stable_homology}
For every Lawvere theory~$T$, there is an isomorphism
\[
\colim_r\rmH_*(\Aut(T_r))\cong\rmH_*(\Omega^\infty_0\rmK(T))
\]
between the stable homology of the automorphism groups of finitely generated free objects of the theory~$T$ and the homology of the zero component~$\Omega^\infty_0\rmK(T)$ of the algebraic~K-theory space~$\Omega^\infty\rmK(T)$.
\end{theorem}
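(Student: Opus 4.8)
The plan is to read the statement off directly from Theorem~\ref{thm:colimit}. That equivalence writes the full K-theory space as the product~$\rmK_0(T)\times\rmB\!\colim_r\Aut(T_r)^+$, in which the first factor is the discrete set of path components and the second factor is connected. Since an equivalence of spaces induces a bijection on~$\pi_0$ and carries components to components, restricting to the component of the basepoint yields an equivalence
\[
\Omega^\infty_0\rmK(T)\simeq\rmB\!\colim_r\Aut(T_r)^+
\]
of spaces. Equivalently, one could invoke the remark following Theorem~\ref{thm:colimit} that all components of~$\Omega^\infty\rmK(T)$ are equivalent.

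Next I would pass to homology. Since the plus construction preserves integral homology by definition, applying~$\rmH_*$ to the displayed equivalence gives
\[
\rmH_*(\Omega^\infty_0\rmK(T))\cong\rmH_*(\rmB\!\colim_r\Aut(T_r)).
\]
The remaining task is to identify the right-hand side with~$\colim_r\rmH_*(\Aut(T_r))$. Here I would assemble three standard facts: first, group homology is the homology of the classifying space, so~$\rmH_*(\Aut(T_r))\cong\rmH_*(\rmB\Aut(T_r))$; second, the classifying space functor commutes with the filtered colimit taken along the stabilization maps, so that~$\rmB\!\colim_r\Aut(T_r)\simeq\colim_r\rmB\Aut(T_r)$; and third, singular homology commutes with filtered colimits of spaces, because every singular chain has compact image and hence factors through a finite stage, so~$\rmH_*(\colim_r\rmB\Aut(T_r))\cong\colim_r\rmH_*(\rmB\Aut(T_r))$. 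Stringing these isomorphisms together with the previous display produces the asserted identification.

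I do not expect a genuine obstacle here: the substantive content is entirely carried by Theorem~\ref{thm:colimit}, and everything else is bookkeeping with well-known compatibilities. The one point deserving care is the triple interchange of homology, classifying spaces, and the colimit; I would make sure the stabilization maps~\eqref{eq:stabilization} serve as the structure maps throughout, so that the~``stable homology'' appearing on the left is literally the colimit on the right and no reindexing is needed. It is worth noting that injectivity of these maps (Proposition~\ref{prop:stabilization_is_faithful}) is not actually required for the homological statement, though it reassures us that the colimit is realized as a genuine increasing union and hence that the compact-support argument applies cleanly.
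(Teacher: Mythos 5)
Your proposal is correct and follows the paper's own argument: the paper likewise deduces the theorem immediately from Theorem~\ref{thm:colimit} together with the fact that the plus construction preserves homology, with the remaining identifications (restriction to the zero component and the interchange of homology, classifying spaces, and the filtered colimit) treated as standard bookkeeping. Your more explicit spelling-out of that bookkeeping, including the observation that injectivity of the stabilization maps is reassuring but not needed, is consistent with what the paper leaves implicit.
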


Ideally, the algebraic~K-theory spectrum~$\rmK(T)$ is more accessible and easier to understand and describe than the stable automorphism group~$\colim_r\Aut(T_r)$. This is not at all plausible from the definition; only the now-classical methods of algebraic~K-theory, which have been developed over half a century, allow us to take this stance.  From this perspective, Theorem~\ref{thm:stable_homology} should be thought of as a computation of the group homology, once the spectrum~$\rmK(T)$ is identified. The examples in Sections~\ref{sec:examples} and~\ref{sec:Boole} give a taste of the flavor of some non-trivial~(and non-linear) cases.


\section{Some non-linear examples}\label{sec:examples}

The goal of this section is to demonstrate the interest in the algebraic~K-theory~$\rmK(T)$ of Lawvere theories~$T$ beyond what are arguably the most fundamental examples, the theories of modules over rings:

\begin{example}\label{ex:modules} 
Consider the theory of modules over a ring~$A$, as in Example~\ref{ex:Amodules}. The automorphism group of the free~$A$--module~$A^r$ of rank~$r$ is the general linear group~\hbox{$\Aut(A^r)=\GL_r(A)$}. The algebraic~K-theory spectrum~$\rmK(A)$ is Quillen's algebraic~K-theory~(actually, the `free' version). In particular~$\rmK(\bbZ)$ is the~K-theory spectrum of the Lawvere theory of abelian groups, in the guise of~$\bbZ$--modules.
\end{example}

We can now move on to discuss non-linear examples: theories that are not given as modules over a ring.

\begin{example}\label{ex:sets}
Consider the initial theory~$E$ of sets. The automorphisms are just the permutations, and the automorphism group~\hbox{$\Aut\{1,\dots,r\}=\Sigma(r)$} is the symmetric group on~$r$ symbols. The algebraic~K-theory is the sphere spectrum:~$\rmK(E)\simeq\bbS$. This is one version of the Barratt--Priddy theorem~\cite{Priddy,Barratt+Priddy}. We go into detail so that we can use the same notation later as well: Let~$\rmQ\simeq\Omega^\infty\bbS$ denote the infinite loop space of stable self-maps of the spheres. The path components of the space~$\rmQ$ are indexed by the degree of the stable maps, as a reflection of~\hbox{$\pi_0(\bbS)=\bbZ$}, and we will write~$\rmQ(r)$ for the component of maps of degree~$r$. There are maps~$\rmB\Sigma(r)\to\rmQ(r)$ which are homology isomorphisms in a range that increases with~$r$ by Nakaoka stability~\cite{Nakaoka}. These maps fit together to induce a homology isomorphism
\begin{equation}\label{eq:SigmaQ}
\rmB\Sigma(\infty)\to\rmQ(\infty)
\end{equation}
between the colimits. The stabilization~$\rmQ(r)\to\rmQ(r+1)$ is always an equivalence, so that all the maps~$\rmQ(r)\to\rmQ(\infty)$ to the colimit are equivalences as well. Passing to group completions, the map~\eqref{eq:SigmaQ} induces an equivalence~\hbox{$\Omega^\infty_0\rmK(E)\simeq\Omega^\infty_0\bbS$} of infinite loop spaces, so that~$\rmK(E)\simeq\bbS$ as spectra. We refer to Morava's notes~\cite{Morava} for more background and for relations to the algebraic~K-theory of the finite fields~$\bbF_q$ when the number~$q$ of elements goes to~$1$.
\end{example}

\begin{example}\label{ex:Segal} 
More generally, for any discrete group~$G$, we can consider the Lawvere theory of~$G$--sets. The algebraic~K-theory spectrum of the Lawvere theory of~$G$--sets is~\hbox{$\rmK(\text{$G$--$\Sets$})\simeq\Sigma^\infty_+(\rmB G)$}, the suspension spectrum of the classifying space~$\rmB G$~(with a disjoint base point~$+$). This observation is attributed to Segal. In particular, for the Lawvere theory~$\bbZ$--sets, this gives
\[
\rmK(\text{$\bbZ$--$\Sets$})\simeq\Sigma^\infty_+(\rmB\bbZ)\simeq\Sigma^\infty_+(\rmS^1)\simeq\bbS\vee\Sigma\bbS.
\]
The theory~$\bbZ$--sets is the theory of permutations~\cite{Szymik:permutations}: a model is a set together with a permutation of that set.
\end{example}

\begin{example}\label{ex:Galatius}
Consider the theory~$\Groups$ of~(all) groups. In this case, the automorphism groups~$\Aut(\rmF_r)$ are the automorphism groups of the free groups~$\rmF_r$ on~$r$ generators. The algebraic~K-theory space has been shown to be the infinite loop space underlying the sphere spectrum by Galatius~\cite{Galatius}: the unit~$\bbS\simeq\rmK(\Sets)\to\rmK(\Groups)$ is an equivalence.
\end{example}

The theory of abelian groups has been dealt with in Example~\ref{ex:modules}.

\begin{example}\label{ex:nil_tower}
There is an interpolation between the theory of all groups and the theory of all abelian groups by the theories~$\Nil_c$ of nilpotent groups of a certain class~$c$, with~\hbox{$1\leqslant c\leqslant\infty$}. There is a corresponding diagram
\[
\xymatrix@R=15pt{
&&\vdots\ar[d]\\
&&\rmK(\Nil_3)\ar[d]\\
&&\rmK(\Nil_2)\ar[d]\\
\bbS\ar@{=}[r]&\rmK(\Groups)\ar[r]\ar[ur]\ar[uur]&\rmK(\Abel)\ar@{=}[r]&\rmK(\bbZ)
}
\]
of algebraic~K-theory spectra. This tower has been studied from the point of view of homological stability and stable homology in~\cite{Szymik:twisted} and~\cite{Szymik:rational}, respectively.
\end{example}

\begin{example}\label{ex:monoids}
In contrast to groups, the algebraic~K-theory of the Lawvere theory~$\Monoids$ of~(associative) monoids~(with unit) is easy to compute: the free monoid on a set~$X$ is modeled on the set of words with letters from that set, and it has a unique basis: the subset of words of length one, which can be identified with~$X$. This implies that the automorphism group of the free monoid on~$r$ generators is isomorphic to the symmetric group~$\Sigma(r)$, so that the map~$\rmK(E)\to\rmK(\Monoids)$ from the algebraic~K-theory of the initial theory~$E$ of sets is an equivalence. By Example~\ref{ex:sets}, we get an equivalence~$\rmK(\Monoids)\simeq\bbS$ of spectra. It follows, again from Galatius's theorem~(see Example~\ref{ex:Galatius}), that the canonical morphism~$\rmK(\Monoids)\to\rmK(\Groups)$ is an equivalence. It would be interesting to see a proof of this fact that does not depend on his result.
\end{example}

\begin{example}\label{ex:Cantor}
Let~$a\geqslant 2$ be an integer. A {\em Cantor algebra} of arity~$a$ is a set~$X$ together with a bijection~$X^a\to X$. The Cantor algebras of arity~$a$ are the models for a Lawvere theory~$\Cantor_a$, and its algebraic~K-theory has been computed in~\cite{Szymik+Wahl}:
\begin{equation}\label{eq:Moore}
\rmK(\Cantor_a)\simeq\bbS/(a-1),
\end{equation}
the mod~\hbox{$a-1$} Moore spectrum . In particular, the spectrum~$\rmK(\Cantor_2)$ is contractible. Note that the definition makes sense for~\hbox{$a=1$} as well. In that case, we have an isomorphism between~$\Cantor_1$ and the Lawvere theory~$\bbZ$--Sets of permutations, and the equivalence~\eqref{eq:Moore} is still true by Example~\ref{ex:Segal}.
\end{example}

\begin{example} 
Lawvere theories can be presented by generators and relations. The `generators' of a theory are specified in terms of a graded set~\hbox{$P=(\,P_a\,|\,a\geqslant0\,)$}, where~$P_a$ is a set of operations of arity~$a$. There is a free Lawvere theory functor~\hbox{$P\mapsto T_P$} that is left adjoint to the functor that assigns to a theory the graded set of operations. For instance, let~$[a]$ be the graded set that only has one element, and where the degree of that element is~$a$. Then~$T_{[a]}$ is the free theory generated by one operation of arity~$a$. For instance, the Lawvere theory~$T_{[0]}$ is the theory of pointed sets. The Lawvere theory~$T_{[1]}$ is the theory of self-maps~(or~$\bbN$--sets): sets together with a self-map, and~$T_{[2]}$ is the theory of magmas: sets equipped with a multiplication that does not have to satisfy any axioms. The free~$T_{[a]}$--model on a set~$X$ is given by the set of all trees of arity~$a$ with leaves colored in~$X$. This model has a unique basis: the trees of height~$1$, and we can argue as in Example~\ref{ex:monoids} that~$\rmK(T_{[a]})\simeq\bbS$.
\end{example}

Finally, we mention the two trivial~(or {\em inconsistent}, in Lawvere's terminology) examples of theories where the free model functor is not faithful~(see Lawvere's thesis~\cite[II.1, Prop.~3]{Lawvere:thesis}).

\begin{example}
There is a theory such that all models are either empty or singletons. It has no operations aside from the projections~$X^n\to X$, and the relations require that all these projections are equal, so that~$x_1=x_2$ for all elements~$x_j$ in a set~$X$ that is a model.
\end{example}

\begin{example}\label{ex:modules_over_the _trivial_ring}
There is a theory such that all models are singletons. It has a~$0$--ary operation~(constant)~$e$, and the relation~$x=e$ has to be satisfied for all~$x$ in a model~$X$. Another way of describing the same Lawvere theory: this is the theory of modules over the trivial ring, where~\hbox{$0=1$}. From this perspective, the theory is not so exotic after all!
\end{example}

For both of these examples, the algebraic~K-theory spectra are obviously contractible.


\section{Morita equivalences and invariance for matrix theories}\label{sec:matrix}

Given a Lawvere theory~$T$ and an integer~$n\geqslant1$, the {\em matrix theory}~$\rmM_n(T)$ is the Lawvere theory such that the free~$\rmM_n(T)$--model on a set~$X$ is the free~$T$--model on the set~\hbox{$\underline{n}\times X$}~(see~\cite[Sec.~4]{Wraith}). 
In other words, the category~$\bfF_{\rmM_n(T)}$ is the full subcategory of the category~$\bfF_{T}$ consisting of the objects~$T_{nr}$ for~$r\geqslant0$.  

More diagrammatically, we may view~$\underline{n}\times -$ as a strong monoidal endofunctor of~$\bfF_T$, which takes an object~$T_r$ to the~$n$--fold sum of~$T_r$ with itself.  The underlying category of the Lawvere theory~$\rmM_n(T)$ is the image of~$\underline{n}\times -$ and the structure functor that defines~$M_n(T)$ as a Lawvere theory is the composite
\[
\bfE\xrightarrow{\phantom{n\times -}}\bfF_T\xrightarrow{\underline{n}\times -}\bfF_T.
\]

It is easy to describe all~$\rmM_n(T)$--models up to isomorphism: given a~$T$--model~$X$, we can construct an~$\rmM_n(T)$--model on the~$n$--th cartesian power~$X^n$ of~$X$; the~$r$--ary~$\rmM_n(T)$--operations~\hbox{$(X^n)^r\to X^n$} are the maps such that all components~\hbox{$(X^n)^r\to X$} are~$nr$--ary~$T$--operations on~$X$. In particular, we get a unary operation~$X^n\to X^n$ for each self-map of the set~$\underline{n}$, and so the monoid~$\End(\underline{n})$ acts on the model~$X^n$.  Every model arises this way, up to isomorphism. Every~$\rmM_n(T)$--model of the form~$X^n$ has an underlying~$T$--model consisting of the operations that are themselves~$n$--th powers, which gives a forgetful functor~$\bfM_{\rmM_n(T)}\to \bfM_T$. Equivalently, there is a morphism
\begin{equation}\label{eq:matrix_morphism}
T\longrightarrow\rmM_n(T)
\end{equation}
of Lawvere theories. From the diagrammatic perspective, this morphism is simply the above functor~$\underline{n}\times-\colon \bfF_T\to \bfF_{\rmM_n(T)}\subset \bfF_T$, which by construction is a functor under~$\bfE$ and hence a map of Lawvere theories.  We readily observe that there are isomorphisms~\hbox{$\rmM_1(T)\cong T$} and~\hbox{$\rmM_m(\rmM_n(T))\cong\rmM_{mn}(T)$}. If~$T$ is the theory of modules over a ring~$A$ as in Example~\ref{ex:Amodules}, then~$\rmM_n(T)$ is the theory of modules over the matrix ring~$\rmM_n(A)$.
The Lawvere theory~$\rmM_n(E)$ is the theory of~$\End(\underline{n})$--sets.

We now show that the higher algebraic~K-theory of a Lawvere theory~$T$ is invariant under passage to matrix theories~$\rmM_n(T)$. 

\begin{theorem}\label{thm:matrix_invariance}
For every Lawvere theory~$T$, there is an equivalence
\[
\Omega^\infty_0\rmK(\rmM_n(T))\simeq\Omega^\infty_0\rmK(T)
\]
of infinite loop spaces.
\end{theorem}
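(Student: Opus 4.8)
The plan is to reduce everything to Theorem~\ref{thm:colimit}, which identifies the zero component $\Omega^\infty_0\rmK(T)$ with the plus-construction factor $\rmB\!\colim_r\Aut(T_r)^+$. So it suffices to produce a natural map of infinite loop spaces $\Omega^\infty_0\rmK(\rmM_n(T))\to\Omega^\infty_0\rmK(T)$ and to check that, after Theorem~\ref{thm:colimit}, it is realized by $\rmB(-)^+$ applied to an \emph{isomorphism} of stable automorphism groups. Functoriality of the plus construction then promotes that isomorphism to an equivalence of spaces, and the infinite-loop structure comes for free because the map itself arises from a map of spectra.

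To get the map of spectra, I would observe that $\bfF_{\rmM_n(T)}$ is the full subcategory of $\bfF_T$ on the objects $T_{nr}$, and that this inclusion is compatible with the monoidal structure: the sum of $\rmM_n(T)_r$ and $\rmM_n(T)_s$ is $\rmM_n(T)_{r+s}$, which under the identification $\rmM_n(T)_r=T_{nr}$ is $T_{n(r+s)}=T_{nr}+T_{ns}$, the sum computed in $\bfF_T$; the unit $T_0$ also lies in the subcategory. Hence the inclusion of groupoids of isomorphisms $\bfF_{\rmM_n(T)}^\times\hookrightarrow\bfF_T^\times$ is a symmetric monoidal functor and so induces a map of K-theory spectra $\rmK(\rmM_n(T))\to\rmK(T)$, and in particular a map of infinite loop spaces on zero components.

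Next I would compute the effect on stable automorphism groups. Fullness of the subcategory gives $\Aut(\rmM_n(T)_r)\cong\Aut(T_{nr})$, and since the free model $\rmM_n(T)_1$ is $T_n$, the stabilization map for $\rmM_n(T)$ is precisely the $n$-fold $T$-stabilization $\Aut(T_{nr})\to\Aut(T_{nr+n})$ obtained by adding $\id(T_n)$. Thus the telescope computing $\colim_r\Aut(\rmM_n(T)_r)$ is the subdiagram of~\eqref{eq:stableG} for $T$ indexed by the multiples of $n$, and the map induced by the inclusion on colimits is exactly the canonical comparison map of this subdiagram into~\eqref{eq:stableG}. Because the multiples of $n$ are cofinal in $\bbN$, this comparison is an isomorphism $\colim_r\Aut(\rmM_n(T)_r)\cong\colim_r\Aut(T_r)$. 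Feeding this into Theorem~\ref{thm:colimit} for both theories, the map on zero components is $\rmB(-)^+$ of an isomorphism, hence an equivalence, which proves the claim.

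The main point to get right is the second step of this last paragraph: one must verify not merely that $\Aut(\rmM_n(T)_r)$ and $\Aut(T_{nr})$ are abstractly isomorphic, but that the monoidal inclusion induces, on stable automorphism groups, \emph{exactly} the cofinal-subsystem map, i.e.\ that the $\rmM_n(T)$-stabilization coincides with the $n$-fold $T$-stabilization under the identification $\rmM_n(T)_r=T_{nr}$. This is a matter of unwinding how the monoidal sum in $\bfF_{\rmM_n(T)}$ is inherited from $\bfF_T$ together with $\rmM_n(T)_1=T_n$. Once this bookkeeping is in place, cofinality of the multiples of $n$ in $\bbN$ makes the colimit comparison immediate, and no further analysis of the K-theory machine is needed.
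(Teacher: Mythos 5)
Your proposal is correct and takes essentially the same route as the paper: identify $\Aut(\rmM_n(T)_r)\cong\Aut(T_{nr})$ via fullness, observe that the stabilization telescope for $\rmM_n(T)$ is the cofinal subdiagram of~\eqref{eq:stableG} indexed by the multiples of~$n$, and conclude by feeding the resulting isomorphism of stable automorphism groups into Theorem~\ref{thm:colimit}. Your additional step of realizing the equivalence by a spectrum map induced by the symmetric monoidal inclusion $\bfF_{\rmM_n(T)}^\times\hookrightarrow\bfF_T^\times$ (rather than by a morphism of theories) is exactly what the paper records in Remark~\ref{rem:surjection_2}, and it correctly supplies the infinite-loop-space compatibility that the paper's own proof leaves implicit.
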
 

\begin{proof}
We may use that the existence of isomorphisms~$\rmM_n(T)_r\cong T_{n\times r}$ of models implies that we have isomorphisms
\[
\Aut(\rmM_n(T)_r)\cong\Aut(T_{n\times r})
\]
between the automorphism groups. Therefore, when we compare the diagrams~\eqref{eq:stableG}, the one with the groups~$\Aut(\rmM_n(T)_r)$ for~$\rmM_n(T)$ naturally embeds as a cofinal subdiagram of the diagram with the groups~$\Aut(T_r)$ for~$T$. We only see every~$n$--th term, but the colimits can be identified, of course, and this proves the statement on the level of spaces. 

To see that we have an equivalence of infinite loop spaces, we show that this map is induced by a map of spectra. However, the equivalence is {\it not} induced by the morphism~\hbox{$\rmK(T)\to\rmK(\rmM_n T)$} of spectra that comes from the canonical morphism~\eqref{eq:matrix_morphism} of theories. A remedy is to leave the world of Lawvere theories for the rest of the proof and use the general context of symmetric monoidal theories. Then we see that the equivalence {\it does} come from a morphism~\hbox{$\rmK(\rmM_n T)\to\rmK(T)$} of spectra in the other direction. This morphism of spectra is obtained from the symmetric monoidal functor~$\bfF_{\rmM_n(T)}\to\bfF_T$ given by the inclusion of~$\bfF_{\rmM_n(T)}$ into~$\bfF_T$ as the image of the functor~$\underline{n}\times-$.  This functor is defined by~\hbox{$\rmM_n(T)_r\cong T_{n\times r}\mapsto T_{n\times r}$} and so, while it is essentially the identity on morphisms, it is not necessarily surjective on objects. In particular, it need not be surjective on the level of components, as is required for a map of Lawvere theories according to Remark~\ref{rem:surjection}. On the component of zero, however, it has the effect described in the first part of the proof, showing that we have an equivalence of infinite loop spaces.
\end{proof}

In fact, as tempting as it might be to hope for an equivalence~$\rmK(\rmM_n T)\simeq\rmK(T)$ of~K-theory spectra, we {\it cannot} have that, in general, because of the difference in the groups~$\rmK_0$ of components: 

\begin{example}\label{ex:K_0M_n}
As explained in~\cite[Rem.~5.3]{Szymik+Wahl} and Example~\ref{ex:Cantor} of the following section, the Cantor theories~$\Cantor_a$  of arity~\hbox{$a\geqslant2$} have~$\rmK_0(\Cantor_a)=\bbZ/(a-1)$ finite. But by construction, the matrix theory~$\rmM_n(\Cantor_a)$ only involves the elements represented by multiples of~$n$ in the group~$\bbZ/(a-1)$. Therefore, if~$n$ is not coprime to~$a-1$, then~$\rmK_0(\rmM_n\Cantor_a)$ will be strictly smaller than~$\rmK_0(\Cantor_a)$. In particular, the morphisms between~$\rmK(\Cantor_a)$ and~$\rmK(\rmM_n\Cantor_a)$ described in Remark~\ref{rem:surjection} and Example~\ref{ex:K_0M_n} are {\it not} equivalences in this case.
\end{example}

Theorem~\ref{thm:matrix_invariance} might suggest that the higher algebraic~K-theory of Lawvere theories is Morita invariant, but we show in the rest of the paper that this is not the case. We start with the definition.

\begin{definition}\label{def:Morita_eq}
Two Lawvere theories~$S$ and~$T$ are called {\it Morita equivalent} if their categories~$\bfM_S$ and~$\bfM_T$ of models are equivalent. 
\end{definition}

For instance, if~$S$ is the Lawvere theory of modules over a ring~$A$, then~$T$ is also a Lawvere theory of modules over a ring~$B$, and this ring~$B$ is Morita equivalent to~$A$ in the usual sense; see~\cite[Ex.~3.1]{ASS}. Thus, Definition~\ref{def:Morita_eq} is in agreement with the established terminology for Lawvere theories that are given by rings. In general, it turns out that the Morita equivalence relation is generated by two processes, one of which we have already seen.

\begin{proposition}[{\bf\cite{Dukarm},~\cite{McKenzie}}]
A Lawvere theory is Morita equivalent to a given Lawvere theory~$T$ if and only if it is an idempotent modification of a matrix theory of~$T$ for some pseudo-invertible idempotent of the matrix theory. 
\end{proposition}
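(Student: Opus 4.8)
The plan is to reduce the assertion to a reconstruction principle that recovers a Lawvere theory from its category of models together with a choice of generator, in direct analogy with classical Morita theory for rings. The starting point is that~$\bfF_T$ sits inside~$\bfM_T$ as the full subcategory on the free models~$T_r$, and that the entire theory~$T$ is encoded by the full subcategory on the finite copowers of the single object~$T_1$. Conversely, given any variety~$\mathcal{V}$~(that is, any category equivalent to some~$\bfM_U$) together with an object~$G$ that is a finitely generated regular-projective generator, one builds a Lawvere theory~$S_G$ whose category~$\bfF_{S_G}$ is the full subcategory of~$\mathcal{V}$ on the finite copowers~$rG$ of~$G$; the product-preserving presheaves on~$\bfF_{S_G}$ then reproduce~$\mathcal{V}$, so that~$\bfM_{S_G}\simeq\mathcal{V}$. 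I would first establish this principle, yielding a bijection between Morita-equivalence classes of theories with~$\bfM_S\simeq\bfM_T$ and isomorphism classes of finitely generated regular-projective generators of the fixed variety~$\bfM_T$, the object~$S_1$ corresponding to the chosen generator.

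Next I would identify these generators explicitly inside~$\bfM_T$. Because~$\bfM_T(T_r,-)\cong(-)^r$ carries surjections to surjections, every free model~$T_n$ is regular projective; and since every model is a regular-epi quotient of a free one, a finitely generated regular projective is exactly a retract of some finitely generated free model~$T_n$, hence the splitting of an idempotent~$e$ on~$T_n$. Now~$T_n$ is precisely the generator~$\rmM_n(T)_1$ of the matrix theory, so such an~$e$ is an idempotent of~$\rmM_n(T)$, and forming its retract is exactly the operation of idempotent modification. The retract~$G$ is a generator of~$\bfM_T$ if and only if~$T_1$ is itself a retract of a finite copower of~$G$, which is the pseudo-invertibility condition on~$e$; no separate projectivity hypothesis is needed, since retracts of free models are automatically regular projective. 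Reading this dictionary in both directions gives the two implications of the proposition: idempotent modifications of matrix theories by pseudo-invertible idempotents are exactly the theories whose generator is a finitely generated regular-projective generator of~$\bfM_T$, that is, exactly the theories Morita equivalent to~$T$.

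The main obstacle is the reconstruction principle of the first paragraph, and specifically pinning down that the generator hypothesis is precisely what forces~$\bfM_{S_G}\simeq\mathcal{V}$: one must check that the finite copowers of~$G$ are dense and regular projective enough that product-preserving presheaves on them recover every object and morphism of~$\mathcal{V}$, which is where the full strength of ``finitely generated regular-projective generator'' is used and where the argument has to be matched against the universal-algebra formulations in~\cite{Dukarm} and~\cite{McKenzie}. The remaining ingredients are routine: the equivalence~$\bfM_{\rmM_n(T)}\simeq\bfM_T$ was already obtained in Section~\ref{sec:matrix}, where every~$\rmM_n(T)$--model was seen to be of the form~$X^n$; retracts of regular projectives are regular projective; and the identification of idempotents on~$T_n$ with idempotents of~$\rmM_n(T)$, together with the identification of the generator condition with pseudo-invertibility, is a direct unwinding of the definitions. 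Combining these yields the proposition.
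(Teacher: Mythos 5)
The paper does not prove this proposition at all: it is quoted as a known result, with the proof deferred to \cite{Dukarm} and \cite{McKenzie}, so there is no internal argument to compare against --- the comparison has to be with those references. Your sketch is, in outline, a faithful reconstruction of Dukarm's proof: the structure--semantics correspondence (a variety with a chosen finitely generated regular-projective generator~$G$ determines a Lawvere theory on the finite copowers of~$G$, and product-preserving presheaves on that subcategory recover the variety), the identification of such generators with splittings of idempotents~$e\colon T_n\to T_n$, and the observation that~$T_n=\rmM_n(T)_1$, so that passing to the retract of~$e$ is exactly forming the idempotent modification of~$\rmM_n(T)$. Three points deserve care. First, your claimed \emph{bijection} between theories~$S$ with~$\bfM_S\simeq\bfM_T$ and isomorphism classes of generators is overstated: non-isomorphic generators can produce isomorphic theories, since they may differ by an autoequivalence of~$\bfM_T$; but the proposition only needs the two implications of your dictionary, so this is harmless. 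Second, pseudo-invertibility as defined in the paper is a condition internal to the matrix theory: it says that~$\rmM_n(T)_1=T_n$ is a retract of a finite copower of the image of the idempotent, whereas you phrase the generator condition via~$T_1$; these are equivalent because~$T_1$ is a retract of~$T_n$ and~$T_n$ is a finite copower of~$T_1$, but the translation should be stated. Third, the load-bearing step is the reconstruction principle itself, which you correctly flag but use as a black box --- that is precisely the content of the cited papers, and it silently requires that idempotents split in~$\bfM_T$ (true, by cocompleteness) even though they need not split in~$\bfF_T$, which is exactly why idempotent modifications yield genuinely new theories and why, as the rest of Section~\ref{sec:Boole} shows, K-theory can tell them apart.
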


Since behavior of algebraic K-theory on passage to matrix theories is already fully described by our results above, we now turn to idempotent modifications.

Let~$T$ be a Lawvere theory with an idempotent endomorphism~\hbox{$u\colon T_1\to T_1$} of the free~$T$--model~$T_1$ on one generator. We write~$u_n\colon T_n\to T_n$ for the~$n$--fold sum, so that~$u_1=u$. An idempotent~$u$ is {\it pseudo-invertible} if, for some fixed~$k$, there are morphisms~$T_1\to T_k$ and~$T_k\to T_1$ such that their composition around~$u_k\colon T_k\to T_k$ is the identity on~$T_1$.

\begin{lemma}\label{lem:properties}
Consider the following properties for a morphism~$f\colon T_r\to T_s$ in~$\bfF_T$ with respect to a fixed idempotent~$u$.
  \begin{enumerate}[label=\normalfont(\arabic*),nosep]
\item~$f=u_sgu_r$ for some~$g\colon T_r\to T_s$
\item~$u_sf=f=fu_r$
\item~$u_sf=fu_r$
  \end{enumerate}
Then~{\upshape(1)}~$\Leftrightarrow$ {\upshape(2)}~$\Rightarrow$ {\upshape(3)}. We have~{\upshape(2)}~$\Leftarrow$ {\upshape(3)} if and only if~$u=\id$.
\end{lemma}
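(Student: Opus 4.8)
The plan is to dispatch the three implications that hold unconditionally by a direct manipulation of idempotents, and then to isolate precisely where the reverse implication \mbox{(3)$\Rightarrow$(2)} breaks down by producing a single separating morphism. The one fact I would record at the outset is that each~$u_n$ is itself idempotent: since the monoidal sum is a bifunctor, $u_n\circ u_n=(u+\dots+u)\circ(u+\dots+u)=(u\circ u)+\dots+(u\circ u)=u_n$, using~$u^2=u$. This is the only input the easy equivalences require.

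For \mbox{(1)$\Rightarrow$(2)}, if~$f=u_sgu_r$ then post- and precomposing with the relevant idempotent and using~$u_s^2=u_s$ and~$u_r^2=u_r$ gives~$u_sf=u_s^2gu_r=u_sgu_r=f$ and~$fu_r=u_sgu_r^2=u_sgu_r=f$, which is~(2). For \mbox{(2)$\Rightarrow$(1)} I would simply take~$g=f$: then~$u_sfu_r=fu_r=f$ directly from~(2). The implication \mbox{(2)$\Rightarrow$(3)} is immediate, since~$u_sf=f=fu_r$ forces~$u_sf=fu_r$. None of these involve any choice.

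The substantive part is the biconditional governing \mbox{(3)$\Rightarrow$(2)}. For the \emph{if} direction, when~$u=\id$ we have~$u_n=\id$ for all~$n$, so both~(2) and~(3) degenerate to the tautology~$f=f$ and the implication holds trivially. For the \emph{only if} direction---the one place that needs an idea rather than a rewrite---I would exhibit, whenever~$u\neq\id$, a morphism satisfying~(3) but not~(2). The natural witness is the identity~$f=\id_{T_1}$ (so~$r=s=1$): it satisfies~(3) because~$u_1\circ\id=u=\id\circ u_1$, but it satisfies~(2) only if~$u_1\circ\id=\id$, that is, only if~$u=\id$. Hence if \mbox{(3)$\Rightarrow$(2)} is to hold for every~$f$, applying it to~$\id_{T_1}$ forces~$u=\id$. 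I expect no genuine obstacle in this lemma; the only point demanding care is selecting this counterexample and checking that the identity morphism separates~(3) from~(2) exactly when~$u\neq\id$, with everything else being formal idempotent bookkeeping.
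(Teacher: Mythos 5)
Your proof is correct, and it matches the paper's intent: the paper states this lemma without proof (treating the idempotent bookkeeping as routine), and the key observation in the remark immediately following the lemma---that identity morphisms satisfy~(3) but not necessarily~(2)---is precisely your separating example~$f=\id_{T_1}$. All steps (bifunctoriality of~$+$ giving~$u_n^2=u_n$, taking~$g=f$ for \mbox{(2)$\Rightarrow$(1)}, and the counterexample for the biconditional) check out.
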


We define~$\bfF_T^u\leqslant\bfF_T$ to be the subcategory~(!) consisting of the morphisms that satisfy condition~(3) in Lemma~\ref{lem:properties} above. Note that~(1) and~(2) do not define a subcategory in general, because the identity morphisms satisfy~(3), but not necessarily~(1) or~(2). However, we can define a new category structure on the subsets of~$\bfF_T(T_r,T_s)$ consisting of morphisms satisfying conditions~(1) and~(2): these subsets are closed under composition, and the~$u_r$'s act as new identities. This gives another category~$\bfF_{uTu}$ and another Lawvere theory, the {\it idempotent modification}~$uTu$ of~$T$ with respect to the idempotent~$u$. There is a functor~\hbox{$\bfF_T^u\to\bfF_{uTu}$} defined by~$f\mapsto uf=ufu=fu$, and we can, in principle, compare the new Lawvere theory~$uTu$ to~$T$ using the zigzag
\[
\bfF_{uTu}\longleftarrow\bfF_T^u\longrightarrow\bfF_T
\]
of functors defined above, all of which are the identities on objects.  These functors then induce a comparison zigzag of K-theory spectra.

However, this zigzag of K-theory spectra is not generally an equivalence. In the following section, we provide examples of Lawvere theories that are Morita equivalent but have different higher algebraic~K-theory. This also shows that~\cite[Ex.~IV.4.13(a)]{Weibel}, which suggests that the inclusion of a symmetric monoidal category into its idempotent completion should always be cofinal, is lacking an additivity assumption.


\section{Theories equivalent to the theory of Boolean algebras}\label{sec:Boole}

In this section, we present new computations: we determine the algebraic~K-theory of the Lawvere theory of Boolean algebras. Our methods allow us to deal more generally with the~Lawvere theories of~$v$--valued Post algebras. Boolean algebras form the case~$v=2$. The Lawvere theories of~$v$--valued Post algebras are all Morita equivalent to each other. In fact, these form the set of all the Lawvere theories that are equivalent to the theory of Boolean algebras. As a consequence of our computations, we show that algebraic~K-theory is not Morita invariant in general.

Boolean algebras and their relationship to set theory and logic are fundamental for mathematics and well-known. Post algebras were introduced by Rosenbloom~\cite{Rosenbloom}. They are named after Post's work~\cite{Post} on non-classical logics with~$v$ truth values. Later references are Wade~\cite{Wade}, Epstein~\cite{Epstein}, as well as the surveys by Serfati~\cite{Serfati} and 
Dwinger~\cite{Dwinger}, to which we refer for defining equations and explicit models of the free algebras. In the following, we will only recall their definition as a Lawvere theory and what is necessary for our purposes.

We write~$\Map(R,S)$ for the set of all maps from a set~$R$ to a set~$S$. As before, we build on the specific finite sets~\hbox{$\underline{r}=\{a\in\bbZ\,|\,1\leqslant a\leqslant r\}$}. For a fixed integer~$v\geqslant 2$, we now consider the category whose objects are the finite sets of the form~$\Map(\underline{r},\underline{v})$, where~$r$ ranges over all integers~$r\geqslant0$, and whose morphisms are all maps between these sets. By construction, this category has finite products, and every object~$\Map(\underline{r},\underline{v})$ is the~\hbox{$r$--th} power of the object~$\Map(\underline{1},\underline{v})=\underline{v}$. Therefore, the opposite category has finite co-products, and every object is a multiple of one object, the one corresponding to the set~$\Map(\underline{1},\underline{v})$. This opposite category defines the Lawvere theory~$\Post_v$ of~{\it$v$--valued Post algebras}. For~$v=2$, Post's~$v$--valued logic specializes to the~$2$--valued Boolean logic, and we have~\hbox{$\Post_2=\Boole$}, the Lawvere theory of Boolean algebras. Using our description above, this is a well-known consequence of Stone duality: the set of subsets of~$\Map(\underline{r},\underline{2})$ is a free Boolean algebra on~$r$ generators, with~$2^{2^r}$ elements in total. 

Dukarm~\cite[Sec.~3]{Dukarm} notes that the Lawvere theories~$\Post_v$ are all Morita equivalent to each other. After all, for any given integer~$v\geqslant 2$, any finite set is a retract of a set of the form~$\Map(\underline{r},\underline{v})$ for~$r\geqslant0$ large enough. There is no need for us to choose such a retraction.~(The situation is comparable to the abstract existence of isomorphisms~\hbox{$\overline{\bbQ}_p\cong\bbC$} of fields between the algebraic closure~$\overline{\bbQ}_p$ of the field~$\bbQ_p$ of~$p$--adic numbers and the field~$\bbC$ of complex numbers, showing that the isomorphism type of~$\overline{\bbQ}_p$ is independent of~$p$.) In any event, it follows from the existence of such retractions that the idempotent completions of the categories of free~$v$--valued Post algebras are equivalent to the category of non-empty finite sets, regardless of~$v$. Since these idempotent completions are independent of the integer~$v$, so is the Morita equivalence class of~$\Post_v$, by the results recalled in Section~\ref{sec:matrix}. The following theorem shows that, in contrast, higher algebraic~K-theory detects the number~$v$ of truth values, and~K-theory is therefore not fully Morita invariant.

In order to state the result, we need the spectrum~$R^\times$ of units of a commutative ring spectrum~$R$~(see~\cite{MQRT}). This spectrum is defined so that its underlying infinite loop space~$\Omega^\infty R^\times$ is the union of the components of~$\Omega^\infty R$ that represent units, i.e., are invertible, in the ring~$\pi_0R$. The inclusion~\hbox{$\Omega^\infty R^\times\to\Omega^\infty R$} then induces an isomorphism on higher homotopy groups. The inclusion is not, however, a morphism of infinite loop spaces. Instead, the delooping~$R^\times$ of~$\Omega^\infty R^\times$ comes from the~$\rmE_\infty$ multiplication of~$R$. We need the units for the localization~$R=\bbS[1/v]$ of the sphere spectrum~$\bbS$ away from~$v$ and its~$0$--truncation, the Eilenberg--Mac Lane spectrum~$R=\rmH\bbZ[1/v]$. The truncation induces a morphism~\hbox{$\bbS[1/v]^\times\to\rmH\bbZ[1/v]^\times$} of spectra of units. There is also a homomorphism~$\bbZ\to\bbZ[1/v]^\times$ of abelian groups that sends the generator~$1$ to the unit~$v$, which induces a map of Eilenberg--Mac Lane spectra.

\begin{theorem}\label{thm:K(Post_v)}
For every integer~$v\geqslant 2$, there is a homotopy pullback square
\[
\xymatrix{
\rmK(\Post_v)\ar[r]\ar[d] & \bbS[1/v]^\times\ar[d]\\
\rmH\bbZ\ar[r]_-v  & \rmH\bbZ[1/v]^\times
}
\]
of spectra. In particular, we have
\[
\rmK_*(\Post_v)\cong\pi_*(\bbS)/v\text{\upshape--power torsion},
\]
where the~$\pi_*(\bbS)$ are the stable homotopy groups of spheres. 
\end{theorem}

We single out the case~$v=2$ for emphasis:

\begin{corollary}\label{cor:K(Boole)}
We have
\[
\rmK_*(\Boole)\cong\pi_*(\bbS)/2\text{\upshape--power torsion}
\]
for the algebraic~K-theory of the Lawvere theory of Boolean algebras.
\end{corollary}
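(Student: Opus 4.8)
The plan is to obtain the corollary as the special case~$v=2$ of Theorem~\ref{thm:K(Post_v)}, so that the only genuine work is to justify the identification~$\Boole\cong\Post_2$ asserted in the discussion preceding that theorem. I would carry this out via (finite) Stone duality and then invoke the fact that the algebraic~K-theory spectrum~$\rmK(T)=\rmK(\bfF_T^\times)$ depends on~$T$ only through the symmetric monoidal groupoid~$\bfF_T^\times$, and hence only through the symmetric monoidal category~$\bfF_T$ up to symmetric monoidal equivalence.

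First I would recall Stone duality in the finite setting: the contravariant functor sending a finite Boolean algebra~$B$ to its set of atoms---equivalently, to the set~$\bfM_{\Boole}(B,\underline{2})$ of homomorphisms into the two-element Boolean algebra~$\underline{2}$---is an anti-equivalence between the category of finite Boolean algebras and the category of finite sets. Under this anti-equivalence the free Boolean algebra~$\Boole_r$ on~$r$ generators corresponds to the set~$\Map(\underline{r},\underline{2})$: a homomorphism~$\Boole_r\to\underline{2}$ is exactly an assignment of a truth value to each of the~$r$ free generators, so there are~$2^r$ of them, and~$\Boole_r$ is recovered as the power set of this~$2^r$--element set, with~$2^{2^r}$ elements, matching the count recorded in the text. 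Restricting the anti-equivalence to free algebras therefore yields a contravariant equivalence between~$\bfF_{\Boole}$ and the full subcategory of finite sets on the objects~$\Map(\underline{r},\underline{2})$ with all maps between them. Passing to opposite categories identifies~$\bfF_{\Boole}$, up to equivalence, with~$\bfF_{\Post_2}$, by the very definition of~$\Post_2$ as the opposite of that subcategory.

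Next I would verify that this identification respects the categorical-sum monoidal structures that enter the definition of K-theory. On the Boolean side the monoidal product on~$\bfF_{\Boole}$ is the coproduct of Boolean algebras, and the classical fact to invoke is that Stone duality carries coproducts of Boolean algebras to Cartesian products of the dual finite sets; concretely,~$\Boole_{r+s}$ is the coproduct~$\Boole_r+\Boole_s$ and dualizes to~$\Map(\underline{r+s},\underline{2})\cong\Map(\underline{r},\underline{2})\times\Map(\underline{s},\underline{2})$. On the~$\Post_2$ side the categorical sum on~$\bfF_{\Post_2}$ is, by construction, the coproduct in the opposite category, that is, the Cartesian product of the underlying finite sets. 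Hence the two monoidal structures correspond under the equivalence; in particular the generator~$\Boole_1$ is matched with~$(\Post_2)_1$, and the boundary case is consistent as well, since~$\Boole_0$ is the initial Boolean algebra~$\underline{2}$ whose Stone dual is the one-point set~$\Map(\underline{0},\underline{2})=(\Post_2)_0$.

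With a symmetric monoidal equivalence~$\bfF_{\Boole}\simeq\bfF_{\Post_2}$ established, it restricts to a symmetric monoidal equivalence of the groupoids~$\bfF_{\Boole}^\times\simeq\bfF_{\Post_2}^\times$, and functoriality of the K-theory construction then gives an equivalence~$\rmK(\Boole)\simeq\rmK(\Post_2)$ of spectra; combining this with Theorem~\ref{thm:K(Post_v)} at~$v=2$ yields~$\rmK_*(\Boole)\cong\pi_*(\bbS)/2\text{--power torsion}$. The main obstacle is not the formal application of the theorem but the monoidal bookkeeping in the duality step: one must check carefully that the coproduct of Boolean algebras---which is emphatically \emph{not} the product of underlying sets---is sent by Stone duality to the Cartesian product of Stone spaces, so that the resulting functor is symmetric monoidal and not merely an equivalence of underlying categories. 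Once this compatibility is confirmed, the passage to K-theory is immediate.
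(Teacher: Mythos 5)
Your proposal is correct and follows the paper's own route: the paper obtains the corollary simply as the case~$v=2$ of Theorem~\ref{thm:K(Post_v)}, with the identification~$\Post_2=\Boole$ justified, exactly as you do, by finite Stone duality (the free Boolean algebra on~$r$ generators being the power set of~$\Map(\underline{r},\underline{2})$, with~$2^{2^r}$ elements). The only point where you diverge is in flagging the monoidal compatibility as the main obstacle, whereas it is in fact automatic---both monoidal structures are the categorical coproducts, and any (anti-)equivalence of categories carries coproducts to coproducts (resp.\ products) up to canonical isomorphism---which is presumably why the paper treats the identification as immediate.
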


While Boolean algebras form a comparatively well-known algebraic structure, the~$v$--valued Post algebras are certainly non-standard, and it might come as a surprise that we can prove such results without even revealing their defining operations, let alone the axioms that these operations are required to satisfy. However, as we hope the following proof makes clear, the ability to do so is precisely one of the benefits of our categorical methods.

\begin{proof}[Proof of Theorem~\ref{thm:K(Post_v)}]
By definition, the category of free~$v$--valued Post algebras is equivalent to the opposite of the full subcategory of the category of sets spanned by those sets of the form~$\Map(\underline{r},\underline{v})$. Since these have different cardinalities for different values of~$r$, the isomorphism type of the free~$v$--valued Post algebra of rank~$r$ determines the rank~$r$. Passing to group completion, we get~$\rmK_0(\Post_v)\cong\bbZ\cong\pi_0(\bbS)$, as claimed.

For the higher algebraic~K-theory, we turn toward the automorphism groups. If~$X$ is an object in a category~$\bfC$, we have
\[
\Aut_{\bfC^\op}(X)\cong\Aut_{\bfC}(X)^\op\cong\Aut_{\bfC}(X).
\]
Applied to our situation, this shows that the automorphism group of the free~$v$--valued Post algebra of rank~$r$ is isomorphic to the group of permutations of the set~$\Map(\underline{r},\underline{v})$ of cardinality~$v^r$, and therefore to the symmetric group~$\Sigma(v^r)$ acting on a set of~$v^r$ elements.

Stabilization leads us to the colimit of the diagram
\begin{equation}\label{eq:McDuff+Segal}
\Sigma(1)\longrightarrow
\Sigma(v)\longrightarrow
\Sigma(v^2)\longrightarrow
\cdots\longrightarrow
\Sigma(v^r)\longrightarrow
\cdots,
\end{equation}
where the morphisms are given by multiplication with~$v$: a permutation~$\sigma$ of~$\underline{v}^r$ is sent to the permutation
$\sigma\times\id_v$ of~$\underline{v}^r\times\underline{v}=\underline{v}^{r+1}$, which looks just like~$v$ copies of the permutation~$\sigma$ acting on~$v$ disjoint copies of~$\underline{v}^r$. In other words, the permutation~$\sigma\times\id_v$ is a block sum of~$v$ copies of~$\sigma$.

The diagram~\eqref{eq:McDuff+Segal} has been studied before by McDuff--Segal~\cite[Ex.~(iv)]{McDuff--Segal}, and the following identification of its colimit does not come with any claim on originality.

Picking up our notation from Example~\ref{ex:sets}, we have  maps~$\rmB\Sigma(d)\to\rmQ(d)$ that fit together to form a commutative diagram as follows.
\[
\xymatrix{
\rmB\Sigma(1)\ar[r]^-{\times v}\ar[d]&
\rmB\Sigma(v)\ar[r]^-{\times v}\ar[d]&
\rmB\Sigma(v^2)\ar[r]^-{\times v}\ar[d]&\cdots\\
\rmQ(1)\ar[r]_-{\times v}&
\rmQ(v)\ar[r]_-{\times v}&
\rmQ(v^2)\ar[r]_-{\times v}&\cdots
}
\]
This diagram can be used to compute the group completion of the upper colimit, which is the infinite loop space~$\Omega^\infty_0\rmK(\Post_v)$~by Theorem~\ref{thm:colimit}. This time, in contrast to Example~\ref{ex:sets}, the maps in the lower row are not equivalences, but multiplication by~$v$ in the infinite loop space structure on the~$\rmQ(v^r)\simeq\rmQ(\infty)\simeq\rmQ(0)$. In other words, there is a homology isomorphism from the colimit~$\rmB\Sigma(v^\infty)$ to the localization~$\rmQ(0)[1/v]$ away from~$v$. This homology isomorphism gives, after group completion, an equivalence~\hbox{$\Omega^\infty_0\rmK(\Post_v)\simeq\Omega^\infty_0\bbS[1/v]^\times$} of infinite loop spaces. Noting that the stable homotopy groups of the sphere spectrum in positive degrees are finite, and~$A[1/v]=A/(v\text{--power torsion})$ for finite abelian groups~$A$, we obtain the identification of the higher homotopy groups in the statement of the theorem. In other words, we have a morphism~\hbox{$\rmK(\Post_v)\to\bbS[1/v]^\times$} of spectra that induces an isomorphism on stable homotopy groups in positive degrees.  To complete the identification of the spectrum $\rmK(\Post_v)$,  we need to describe what it does on components. However, the description above shows that $1\in\bbZ\cong\pi_0\rmK(\Post_v)$ is sent to $v\in\bbZ[1/v]^\times\cong\pi_0\bbS[1/v]^\times$, and this observation translates immediately into the homotopy pullback diagram in the statement of the theorem.
\end{proof}


We end this section with an observation which indicates that the relationship between the~K-theories of the Lawvere theory~$E$ of sets and of Boolean algebras, or more generally~$v$--valued Post algebras, is not as simple as Theorem~\ref{thm:K(Post_v)} might suggest.

\begin{proposition}\label{prop:Morava}
For each prime~$p$, the homomorphism
\[
\pi_n(\bbS)\cong\rmK_n(E)\longrightarrow\rmK_n(\Post_p)\cong\pi_n(\bbS)/p\text{--power torsion},
\]
induced by the universal arrow~$E\to\Post_v$ of Lawvere theories, is not surjective. In particular, it is not the canonical surjection.
\end{proposition}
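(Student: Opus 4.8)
The plan is to identify the map concretely and then detect its failure to be surjective on a single homotopy group. First I would unwind what the universal arrow $E\to\Post_p$ does. It sends the free set $E_r$ to the free Post algebra $(\Post_p)_r$, and on automorphism groups---using the identification $\Aut((\Post_p)_r)\cong\Sigma(p^r)$ from the proof of Theorem~\ref{thm:K(Post_v)}---it is the coordinate-permutation representation $\Sigma(r)\to\Sigma(p^r)$ arising from the action of $\Sigma(r)$ on $\Map(\underline r,\underline p)=\underline p^{\,r}$ by permuting the domain. Equivalently, it is the unique symmetric monoidal functor $(\bfF_E^\times,+)\to(\bfF_{\Post_p}^\times,+)$ carrying the generator to $(\Post_p)_1$, so the induced map $f\colon\bbS=\rmK(E)\to\rmK(\Post_p)$ is an infinite loop (unit) map, determined on $\pi_0$ by $[\,E_1\,]\mapsto[\,(\Post_p)_1\,]$, that is, by $1\mapsto1$ on $\rmK_0\cong\bbZ$.

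The key observation is that, although $\rmK_*(\Post_p)$ is the quotient of $\pi_*(\bbS)$ by $p$--power torsion by Theorem~\ref{thm:K(Post_v)}, the group $\rmK_0(\Post_p)\cong\bbZ$ is \emph{not} $\bbZ[1/p]$; hence $\rmK(\Post_p)$ is not the localized sphere, and $f$ need not be the (surjective) localization map, even though both send $1\mapsto1$. So to prove non-surjectivity it suffices to exhibit, for each $p$, one degree $n$ and one class in $\rmK_n(\Post_p)$ outside the image of $f_*$. The cleanest case is $n=1$: here $\pi_1(\bbS)\cong\bbZ/2$ is detected by $\rmH_1(\rmB\!\colim_r\Sigma(r))=\Sigma(\infty)^{\ab}$, and $f_*$ is computed as the sign of the coordinate permutation. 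A transposition acts on $\underline p^{\,r}$ with $p^{r-1}(p-1)/2$ two-cycles, so its sign is $(-1)^{(p-1)/2}$ for odd $p$. Thus $f_*$ is zero on $\pi_1$ exactly when $p\equiv1\pmod 4$, while $\rmK_1(\Post_p)\cong\bbZ/2$ is nonzero; this already settles all primes $p\equiv1\pmod 4$.

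For the remaining primes (namely $p=2$ and $p\equiv3\pmod4$, where $\pi_1$ gives no obstruction) I would pass to higher degrees and detect the image of the $J$-homomorphism using the Adams $e$-invariant. The point is to compute $f_*$ on the image-of-$J$ summand of $\pi_{2k-1}(\bbS)$: since the stabilization in $\Post_p$ is block sum by $p$ and the comparison map is the $p$--power coordinate representation, I expect the effect on the $e$-invariant to be multiplication by an integer governed by $p^{\,k}-1$. This is precisely the Quillen/Morava ``$\bbF_p$'' shadow, since $p^{\,k}-1$ is the order of $\rmK_{2k-1}(\bbF_p)$: one chooses an odd prime $\ell\neq p$ with $\ell\mid p^{\,k}-1$ and a $k$ (a suitable multiple of $\ell-1$) for which the image of $J$ in degree $2k-1$ carries $\ell$--torsion; then the multiplier is divisible by $\ell$, so $f_*$ misses the $\ell$--primary part of that image, and since $\ell\neq p$ this class survives the inversion of $p$ in $\rmK_*(\Post_p)$. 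The main obstacle is exactly this computation---pinning down, uniformly in $p$, how the coordinate permutation representation $\Sigma(r)\to\Sigma(p^r)$ acts on the image of $J$ (equivalently, on the Adams operations in $K$-theory) so as to exhibit the factor divisible by the chosen $\ell$; the degree-one sign computation above is the visible shadow of this phenomenon.
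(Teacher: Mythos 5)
Your degree-one computation is correct and settles the primes $p\equiv 1\pmod 4$ by an elementary argument that is genuinely different from the paper's: the coordinate action of a transposition on $\underline{p}^{\,r}$ has $p^{r-1}(p-1)/2$ two-cycles, so for $p\equiv 1\pmod 4$ it is an even permutation, the induced map $\rmK_1(E)=\bbZ/2\to\rmK_1(\Post_p)=\bbZ/2$ vanishes, and the target is nonzero. But this covers only those primes, and for the remaining cases --- including the headline case $p=2$ of Boolean algebras, where $\pi_1$ gives no obstruction at all --- what you offer is, by your own admission, an unexecuted computation (``the main obstacle is exactly this computation''). Pinning down how the coordinate representation $\Sigma(r)\to\Sigma(p^r)$ acts on the image of~J, or on $e$-invariants, amounts to redoing Quillen's Brauer-lift analysis from scratch; as written, this is a genuine gap, not a detail.

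The missing idea is structural, not computational: every $p$--valued Post algebra carries a natural $\bbF_p$--algebra structure in which $x^p=x$ (for $p=2$ this is the classical identification of Boolean algebras with Boolean rings; see~\cite{Wade} or~\cite{Serfati}), so the unit map factors as $\bbS\simeq\rmK(E)\to\rmK(\bbF_p)\to\rmK(\Post_p)$, which on automorphism groups is the chain of embeddings $\Sigma(r)\to\GL_r(\bbF_p)\to\Sigma(p^r)$ given by permutation matrices followed by the action of $\GL_r(\bbF_p)$ on the set $\bbF_p^{\,r}$. Once you have this factorization, Quillen's theorems finish the proof with no further work: by~\cite{Quillen:Annals}, $\rmK_n(\bbF_p)=0$ for $n>0$ even and $\rmK_{2j-1}(\bbF_p)\cong\bbZ/(p^j-1)$, and by~\cite{Quillen:letter} the image of $\pi_*(\bbS)\to\rmK_*(\bbF_p)$ is essentially the image of the J-homomorphism, so the kernel contains far more than the $p$--power torsion. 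Consequently the image of $\rmK_n(E)\to\rmK_n(\Post_p)$ is a quotient of the image of $\pi_n(\bbS)\to\rmK_n(\bbF_p)$; in particular it vanishes in positive even degrees, where the target $\pi_n(\bbS)/p$--power torsion need not: take $n=2$ (so $\pi_2(\bbS)=\bbZ/2$) for odd $p$, and $n=10$ (where $\pi_{10}(\bbS)\cong\bbZ/6$ has the $3$--torsion class $\beta_1$) for $p=2$. This disposes of all primes uniformly, including exactly those your proposal leaves open; your ``Quillen/Morava $\bbF_p$ shadow'' intuition is the right one, but the way to exploit it is through the factorization through $\rmK(\bbF_p)$ rather than a direct computation with the coordinate representation.
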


\begin{proof}
Every Boolean algebra has a natural structure of an~$\bbF_2$--vector space. The addition is given by the symmetric difference~\hbox{$x+y=(x\vee y)\wedge\neg(x\wedge y)=(x\wedge\neg y)\vee(\neg x\wedge y).$} In fact, the category of Boolean algebras is isomorphic to the category of Boolean rings, which are commutative rings where {\it every} element is idempotent. If~$2$ is idempotent, we have~\hbox{$4=2^2=2$}, so that~$2=0$, and the underlying abelian group is~$2$--torsion. More generally, if~$p$ is a prime number, every~$p$--Post algebra admits a natural structure of an~$\bbF_p$--algebra in which every element~$x$ satisfies~$x^p=x$~(see~\cite{Wade} or~\cite{Serfati}). 

It follows that the canonical morphism~$\bbS\simeq\rmK(E)\to\rmK(\Post_p)$ of algebraic~K-theory spectra factors through the algebraic~K-theory~$\rmK(\bbF_p)$ of the field~$\bbF_p$.
\[
\bbS\simeq\rmK(E)
\longrightarrow\rmK(\bbF_p)
\longrightarrow\rmK(\Post_p)
\]
On the level of automorphism groups, these morphisms correspond to embeddings
\[
\Sigma(r)\longrightarrow\GL_r(\bbF_p)\longrightarrow\Sigma(p^r)
\]
of groups with images given by the subgroups of~$\bbF_p$--linear bijections and the subgroup of that given by the permutation matrices.

Quillen~\cite[Thm.~8(i)]{Quillen:Annals} has shown that~$\rmK_{2j-1}(\bbF_q)\cong\bbZ/(q^j-1)$ for all~$j\geqslant 1$ and for all prime powers~$q$. It follows that the~$p$--torsion of the higher algebraic~K-groups~$\rmK_n(\bbF_p)$ of~$\bbF_p$ is trivial. On the other hand, his computations~\cite{Quillen:letter} showed that most of the stable homotopy of the spheres is contained in the kernel of the canonical morphisms~\hbox{$\bbS\to\rmK(\bbZ)\to\rmK(\bbF_p)$} of spectra: what is detected in the algebraic~K-theory of finite fields is essentially the image of Whitehead's~J-homomorphism. In particular, the kernel contains much more than just the~$p$--power torsion.
\end{proof}

\begin{remark}
Morava, in his 2008 Vanderbilt talk~\cite{Morava}, highlighted ``the apparent fact that the spectrum defined by the symmetric monoidal category of finite pointed sets under Cartesian product has not been systematically studied.'' This spectrum can be modeled as the algebraic~K-theory of a {\it many-sorted} Lawvere theory, where the sorts correspond to the prime numbers. It is not worth the effort to develop our theory in more generality just to cover that one example. Instead, we have contented ourselves with demonstrating how the theory we have developed so far suffices for us to deal with the local factors corresponding to each prime. 
\end{remark}


\section*{Acknowledgments}

The authors would like to thank the Isaac Newton Institute for Mathematical Sciences, Cambridge, for support and hospitality during the program `Homotopy harnessing higher structures' where work on this paper was undertaken. This work was supported by~EPSRC grant no~EP/K032208/1. The first author was partially supported by the United States National Science Foundation under DMS Grants No.~1710534 and 2104300. The authors also thank the anonymous referees for their insightful feedback, which has improved the paper.



\vfill

Department of Mathematics, Vanderbilt University, 1326 Stevenson Center, Nashville, TN, USA\\
\href{mailto:am.bohmann@vanderbilt.edu}{am.bohmann@vanderbilt.edu}

Department of Mathematical Sciences, NTNU Norwegian University of Science and Technology, 7491 Trondheim, NORWAY\\
\href{mailto:markus.szymik@ntnu.no}{markus.szymik@ntnu.no}

School of Mathematics and Statistics, The University of Sheffield, Sheffield S3 7RH, UNITED KINGDOM\\
\href{mailto:m.szymik@sheffield.ac.uk}{m.szymik@sheffield.ac.uk}

\end{document}